\documentclass[10pt, reqno]{amsart}
\usepackage{amsmath, amsthm, amscd, amsfonts, amssymb, graphicx, color}
\usepackage{mathrsfs,amssymb}
\usepackage[bookmarksnumbered, colorlinks, plainpages]{hyperref}
\usepackage{amsmath,amsthm,amssymb, amsfonts, amssymb, graphicx, color}
\usepackage[bookmarksnumbered,colorlinks,plainpages]{hyperref}

\def\authorsaddresses#1{\dedicatory{#1}}
\newtheorem{theorem}{Theorem}[section]

\newtheorem{proposition}[theorem]{Proposition}
\newtheorem{corollary}[theorem]{Corollary}
\theoremstyle{definition}
\newtheorem{definition}[theorem]{Definition}
\newtheorem{example}[theorem]{Example}

\theoremstyle{remark}
\newtheorem{remark}[theorem]{Remark}
\numberwithin{equation}{section}

\begin{document}
\setcounter{page}{1}

\title[]{A Dynamic Residual Measure of Inaccuracy Based on Varextropy for Order Statistics  }

\author[F. Goodarzi]{Faranak Goodarzi}

\authorsaddresses{
Department of Statistics, University of Kashan, Kashan, Iran.\\
f-goodarzi@kashanu.ac.ir}
\subjclass[2010]{Primary 62N05; Secondary 62B10.}

\keywords{Varextropy; Dynamic residual measure of inaccuracy; Order statistics; Residual uncertainty; Kernel estimation. .}
\begin{abstract}
 In this paper, we introduce a variance-based inaccuracy measure for order statistics based on the concept of varextropy. The proposed measure provides a framework for quantifying the variability of the inaccuracy function between an order statistic and its parent distribution. We establish a transformation property of the proposed measure under strictly increasing differentiable transformations. We establish theoretical upper and lower bounds for this measure. We further develop a dynamic residual version of the proposed inaccuracy measure and establish characterization results for the underlying distribution. Furthermore, we derive bounds for the dynamic residual inaccuracy measures based on extropy and varextropy. Finally, a nonparametric estimator of the proposed measure is developed and applied to real data to assess the goodness-of-fit of candidate distributions and facilitate model selection. 
\end{abstract}

\maketitle


\section{Introduction}
Let $X_1, X_2, \ldots, X_n$ be a collection of independent and identically distributed
nonnegative continuous random variables with cumulative distribution function
$F_X(x)$, probability density function $f_X(x)$, and survival function $\bar{F}_X(x)=1-F_X(x)$, defined on the support $S_X$.
Arranging the sample in ascending order yields the order statistics, denoted by
$X_{1:n}\leq X_{2:n}\leq \cdots \leq X_{n:n}$.
Order statistics constitute a fundamental tool in probability and statistics and have
been extensively investigated in the literature.
Their wide range of applications includes reliability analysis, statistical inference, goodness-of-fit testing, characterization of probability
distributions, material strength analysis, outlier identification, survival analysis,
signal processing, risk assessment, and the study of extreme events.

From the perspective of reliability theory, order statistics provide a natural framework for modeling system lifetimes. In particular, the $i$th order statistic
in a sample of size $n$ corresponds to the lifetime of an $(n-i+1)$-out-of-$n$ system, thereby establishing an important connection between order statistics and coherent system reliability.

Let $X$ and $Y$ be two nonnegative continuous random variables describing the lifetimes of
two systems. Denote their probability density functions by $f(x)$ and $g(x)$, their cumulative
distribution functions by $F(x)$ and $G(x)$, and the corresponding survival functions by
$\bar{F}(x)=1-F(x)$ and $\bar{G}(x)=1-G(x)$, respectively.
 Then, the Shannon differential entropy \cite{Shannon} of $X$ is  given by 

\begin{eqnarray}\label{En}
H(X)
=-\int_{0}^{+\infty}f(x)\log f(x)dx,
\end{eqnarray} 
where $-\log f(X)$ represents the information content of $X$.

It is important to note that distinct random variables may exhibit identical Shannon entropy values. In such cases, a more refined characterization of the underlying distributions can be obtained by considering the variability of their information content. In this context, the variance of the information content, known as the varentropy (or variance of entropy) of a random variable $X$  (see \cite{Frad}), is defined as follows
\begin{align}\label{VEn}
V(X)&=
E[(-\log f(X))^2]-[H(X)]^2\nonumber\\&=
\int_{-\infty}^{+\infty}f(x)[\log f(x)]^2dx-
\left[\int_{-\infty}^{+\infty}f(x)\log f(x)dx\right]^2.
\end{align}  
The uncertainty associated with the $i$th order statistic $X_{i:n}$ can be quantified by Shannon's entropy, which is given by
\begin{align}\label{enos}
H(X_{i:n})
=-\int_{0}^{+\infty} f_{i:n}(x)\log f_{i:n}(x)\,dx.
\end{align}
where $f_{i:n}(x)$ is the probability density function of $X_{i:n}$, namely,
\begin{align}\label{eq1.4}
f_{i:n}(x)=\frac{1}{B(i,n-i+1)}
F^{\,i-1}(x)\bar{F}^{\,n-i}(x)f(x),
\qquad i=1,2,\ldots,n,
\end{align}
with
\begin{align}
B(i, n-i+1)=\frac{\Gamma(i)\Gamma(n-i+1)}
{\Gamma(n+1)},
\end{align}
denoting the beta function. Clearly, for $n=1$, Equation \eqref{enos} coincides with Equation \eqref{En}.

Entropy has recently been complemented by a dual measure, known as extropy  \cite{Lad}. 
The extropy associated with a random variable $X$ is defined as follows:
\begin{align}
J(X)=E\left[-\frac{1}{2}f(X)\right]=-\frac{1}{2}\int_{0}^{+\infty}f^2(x)dx.
\end{align}
Recently, \cite{Vaselabadi} introduced a new measure of uncertainty, termed varextropy, which serves as a complementary measure to varentropy and provides an alternative perspective on Shannon entropy. Based on the probability density function, varextropy is defined as follows:
\begin{align}\label{eqve}
\operatorname{VarJ}(X)&=\operatorname{Var}\left[-\frac{1}{2}f(X)\right]=\frac{1}{4}E(f^2(X))-J^2(X).
\end{align}   
The residual varextropy of $X$ at time $t$ is defined by
\begin{equation}
\operatorname{VarJ}(X; t)
=
\frac{1}{4}\mathbb{E}\left[f_{X_t}^2(X_t)\right]
-
J^2(X_t),
\end{equation}
where $X_t=(X-t\mid X>t)$ denotes the residual lifetime of $X$ at time $t$.
%

Hashempour et al. \cite{Hashem} had defined the measure of inaccuracy associated with two random variables $X$ and $Y$ as
\begin{align}\label{exsh}
J(X, Y)=J(X)+J(X|Y)=-\frac{1}{2}\int_{0}^{+\infty}f(x)g(x)dx,
\end{align}
where $J(X|Y)=\frac{1}{2}\int_{0}^{+\infty}f(x)[f(x)-g(x)]dx$ is discrimination measure of variable $X$ with respect to variable $Y$ based on extropy and inaccuracy between density functions $f(x)$ and $g(x)$.
It can be interpreted as the expected difference between the densities $f$ and $g$ with respect to $f$, providing a directional assessment of how $g$  deviates from $f$.

َ

Mohammadi et al. \cite{Moham} introduced two extropy-based inaccuracy measures associated with the ith order statistic $X_{i:n}$. 
The first of these measures is 
defined based on the distribution of $X_{i:n}$ as follows:
\begin{align}
J(X_{i:n})=
-\frac{1}{2}\int_{0}^{+\infty}f_{i:n}^2(x)\,dx=
-\frac{B(2i-1,\,2(n-i)+1)}
{2B^2(i,n-i+1)}E\!\left[f\!\left(F^{-1}(\mathscr{U}_2)\right)\right],
\end{align}
where $\mathscr{U}_2\sim Beta(2i-1, 2(n-i)+1)$.

In addition, they proposed an extropy-based inaccuracy measure between the ith order statistic $X_{i:n}$ and its  parent random variable $X$, given by
\begin{align}\label{eqJM}
J(X_{i:n},X)
&=-\frac{1}{2}\int_{0}^{+\infty}f_{i:n}(x)f(x)\,dx\nonumber\\&=-\frac{1}{2}\int_{0}^{+\infty}
\frac{F^{\,i-1}(x)\bar{F}^{\,n-i}(x)}{B(i,n-i+1)}f^2(x)\,dx\nonumber
\\&=-\frac{1}{2}E[f(F^{-1}(\mathscr{U}_1))],
\end{align}
where $\mathscr{U}_1\sim Beta(i, n-i+1)$.

Motivated by these two measures, the extropy-based discrimination measure between $X_{i:n}$ and its parent random variable  $X$ is given by
\begin{align}
J(X_{i:n}\!\mid X)
=\frac{B(2i-1,\,2(n-i)+1)}{2B^2(i,n-i+1)}E\!\left[f\left(F^{-1}(\mathscr{U}_2)\right)\right]-\frac{1}{2}E\!\left[f\left(F^{-1}(\mathscr{U}_1)\right)\right].
\end{align}
 The remainder of the paper is organized as follows: Section \ref{Section2}  is devoted to the introduction and investigation of a new variance-based inaccuracy measure for quantifying the variability of the inaccuracy function between an order statistic and its parent random variable $X$, based on the concept of varextropy. In particular, an explicit expression is obtained for the exponential distribution, for which a symmetry property with respect to the order statistics is established. For the Weibull distribution, since a closed-form expression is not available, an upper bound for the proposed measure is derived. Further bounds are obtained based on Chernoff’s inequality, and a transformation property under strictly increasing differentiable transformations is established. In Section \ref{Section3}, we develop a dynamic residual version of the proposed inaccuracy measure and establish characterization results for the underlying distribution. Bounds for the dynamic residual inaccuracy measures based on extropy and varextropy are also derived. In Section \ref{Section4}, a nonparametric estimator of the proposed measure is developed. In Section \ref{Section5}, the estimator is applied to a real dataset to illustrate its behavior in practical applications. 
\section{Varjinaccuracy for order statistics}\label{Section2}
For two non-negative continuous random variables $X$ and $Y$, with corresponding probability density functions $f$ and $g$ defined on the same support, \cite{Goodarzi} introduced the varjinaccuracy measure as a variance-based extension of the inaccuracy measure in terms of varextropy. It is defined as
\begin{align}\label{varj2}
\operatorname{VarJ}(X, Y)=\operatorname{Var}_f\left[-\frac{1}{2}g(X)\right]=\frac{1}{4}\int_{0}^{+\infty}g^2(x)f(x)dx-[J(X, Y)]^2.
\end{align}
In this section, we extend this measure to order statistics. 
Let $X_1, X_2, \ldots, X_n$ be a random sample from an absolutely continuous distribution with probability density function $f(x)$ and  distribution function $F(x)$, and let $X_{i:n}$ denote  the ith order statistic with probability density function $f_{i:n}$, then 
\begin{align}
\operatorname{VarJ}(X_{i:n}, X)&=\nonumber\operatorname{Var}_{f_{i:n}}
\!\left(-\frac{1}{2}f(X)\right)\\&=\frac{1}{4}
\left[
\int_{0}^{+\infty}f_{i:n}(x)f^{2}(x)\,dx
-
\left(
\int_{0}^{+\infty}f_{i:n}(x)f(x)\,dx
\right)^{2}
\right]
\nonumber\\&=\frac{1}{4}\left\{
E\left[f^2(F^{-1}(\mathscr{U}_1))\right]-E^2\left[f(F^{-1}(\mathscr{U}_1))\right]
\right\}.\label{e2.1}
\end{align} 
\begin{example}\label{exam2.1}
Let $X_1, X_2, \ldots, X_n$ be independent and identically distributed exponential random variables with probability density function 
\(f(x)=\lambda e^{-\lambda x}, \, x>0,\;\lambda>0,\)
and cumulative distribution function $F(x)=1-e^{-\lambda x}.$ Since
\[F^{-1}(u)=-\frac{1}{\lambda}\log(1-u),\]
we have
\[
f\!\left(F^{-1}(u)\right)=\lambda(1-u), \qquad 0<u<1.
\]
Also, 
\[
E(1-\mathscr{U}_1)=\frac{n-i+1}{n+1},\qquad\text{and}\qquad E\!\left[(1-\mathscr{U}_1)^2\right]
=
\frac{(n-i+1)(n-i+2)}
{(n+1)(n+2)},\]
and
hence,  by Equation \eqref{e2.1}, we obtain
\begin{align*}
\operatorname{VarJ}(X_{i:n},X)
&=
\frac{\lambda^{2}}{4}
\left[
\frac{(n-i+1)(n-i+2)}
{(n+1)(n+2)}
-\left(\frac{n-i+1}{n+1}\right)^2
\right]\\&=\frac{\lambda^{2}}{4}
\,\frac{i(n-i+1)}{(n+1)^2(n+2)}.
\end{align*}
Moreover,
\begin{align}
\operatorname{VarJ}(X_{i:n},X)
=\operatorname{VarJ}(X_{n-i+1:n},X),
\end{align}
which shows that the proposed measure is symmetric with respect to $i$.
In particular,
\[\operatorname{VarJ}(X_{1:n},X)=\operatorname{VarJ}(X_{n:n},X)=\frac{\lambda^{2}}{4}\,\frac{n}{(n+1)^2(n+2)}.\]
\end{example}
\begin{example}\label{exam2.2}
Let $X_1, X_2, \ldots, X_n$ be independent and identically distributed random variables following the Weibull distribution with probability density function
\[f(x)=\frac{a}{b}\left(\frac{x}{b}\right)^{a-1}\exp\!\left[-\left(\frac{x}{b}\right)^a\right],\]
and cumulative distribution function \(F(x)=1-\exp\!\left[-\left(\frac{x}{b}\right)^a\right],\)
where $x>0$, $a>0$, and $b>0$. 
Since
\[F^{-1}(u)=b\left[-\log(1-u)\right]^{1/a},\]
we have
\[f(F^{-1}(u))=\frac{a}{b}(1-u)\left[-\log(1-u)\right]^{\frac{a-1}{a}},\qquad 0<u<1.\]

Hence, by Equation \eqref{e2.1}
\[\begin{aligned}
\operatorname{VarJ}(X_{i:n},X)&=\frac{1}{4}\left[E\!\left(f^{2}(F^{-1}(\mathcal{U}_1))\right)-
\left(E\!\left(f(F^{-1}(\mathcal{U}_1))\right)\right)^2\right] \\
&=\frac{a^{2}}{4b^{2}}\Bigg[E\!\left((1-\mathcal{U}_1)^2
\left[-\log(1-\mathcal{U}_1)\right]^{\frac{2(a-1)}{a}}
\right) \\&\qquad\qquad\qquad-\left(E\!\left((1-\mathcal{U}_1)
\left[-\log(1-\mathcal{U}_1)\right]^{\frac{a-1}{a}}\right)\right)^2\Bigg],\end{aligned}\]
where $\mathcal{U}_1\sim\mathrm{Beta}(i, n-i+1)$.
\end{example}
The following theorem establishes a characterization of the exponential distribution based on the behavior of the density function under the quantile transformation.
\begin{theorem}
Let $X$ be a nonnegative absolutely continuous random variable with cumulative distribution function $F$ and 
probability density function $f$ and with support $S=[0, \infty)$. Then $f(F^{-1}(u))$
is an affine function of $u$, that is, 
\[
f(F^{-1}(u))=\alpha+\beta u,\qquad 0<u<1,
\]
for some constants $\alpha$ and $\beta$, if and only if $X$ follows an exponential distribution.
\end{theorem}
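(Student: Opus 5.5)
The "if" direction follows from Example \ref{exam2.1}. For $f(x)=\lambda e^{-\lambda x}$ we computed $f(F^{-1}(u))=\lambda(1-u)$, which is affine in $u$ with $\alpha=\lambda$ and $\beta=-\lambda$.

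For the converse, I will use the density-quantile function $\phi(u):=f(F^{-1}(u))$. Set $Q=F^{-1}$. Since $F(Q(u))=u$, differentiating gives $Q'(u)=1/f(Q(u))=1/\phi(u)$. The hypothesis $\phi(u)=\alpha+\beta u$ then turns into the ODE
\[
Q'(u)=\frac{1}{\alpha+\beta u},\qquad 0<u<1.
\]
The support is $[0,\infty)$, which pins down two boundary conditions: $Q(0^+)=0$ and $Q(1^-)=+\infty$. Also $\phi>0$ on $(0,1)$, so $\alpha+\beta u>0$ there, and in particular $\alpha\ge 0$ and $\alpha+\beta\ge 0$.

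Next I integrate, splitting into cases on $\beta$.
\begin{itemize}
\item If $\beta=0$, then $Q(u)=u/\alpha$. This is bounded, so $Q(1^-)=\infty$ fails. (This is the uniform distribution, which is excluded by the support.)
\item If $\beta\neq 0$, then $Q(u)=\beta^{-1}\log\bigl((\alpha+\beta u)/\alpha\bigr)$, using $Q(0)=0$. This requires $\alpha>0$; if $\alpha=0$, the integral of $1/(\beta u)$ diverges at $0$, contradicting $Q(0^+)=0$.
\end{itemize}

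Now impose $Q(1^-)=\infty$. For $\beta>0$ the quantity $\log((\alpha+\beta)/\alpha)$ is finite, so this is excluded. For $\beta<0$ the argument $\alpha+\beta u$ must tend to $0$ as $u\to1$, forcing $\beta=-\alpha$. Setting $\lambda=\alpha$, we get $Q(u)=-\lambda^{-1}\log(1-u)$, i.e.\ $F(x)=1-e^{-\lambda x}$.

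The main point needing care is justifying the derivative of $Q$ and the use of the endpoint conditions. Since $F$ is absolutely continuous with $f=\phi\circ F>0$ on the interior of the support, $F$ is strictly increasing there and $Q$ is differentiable. Each endpoint condition eliminates a case: $Q(0^+)=0$ rules out $\alpha=0$, and $Q(1^-)=\infty$ rules out $\beta\ge 0$ and also $\beta<0$ with $\alpha+\beta>0$. Having checked both, the conclusion is exactly the exponential family.
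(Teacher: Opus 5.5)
Your proof is correct and is essentially the paper's argument written in the inverse variable. The paper substitutes $u=F(x)$ to get the linear ODE $F'(x)=\alpha+\beta F(x)$ and fixes the constants with $F(0)=0$ and $F(x)\to 1$; you solve the equivalent separable ODE $Q'(u)=1/(\alpha+\beta u)$ under the same two endpoint conditions, written as $Q(0^+)=0$ and $Q(1^-)=\infty$, and your explicit treatment of $\beta=0$ and $\alpha=0$ covers cases that the paper's general solution $F(x)=-\alpha/\beta+Ce^{\beta x}$ silently skips by assuming $\beta\neq 0$.
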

\begin{proof}
Suppose that $f(F^{-1}(u))=\alpha+\beta u,$ for $0\!<\!u<\!1$.
Setting $u=F(x)$ yields $f(x)=\alpha+\beta F(x).$
Since $f(x)=F'(x)$, we obtain the differential equation $F'(x)-\beta F(x)=\alpha.$
The general solution is 
\(F(x)=-\frac{\alpha}{\beta}+Ce^{\beta x},\)
where $C$ is a constant. Since $F(0)=0$, we obtain $C=\frac{\alpha}{\beta},$
and therefore
\(F(x)=\frac{\alpha}{\beta}\left(e^{\beta x}-1\right).\)
On the other hand, 
$\lim_{x\to\infty}F(x)=1,$ hence $\beta<0$. Writing $\beta=-\lambda$ with $\lambda>0$, we obtain
$F(x)=\frac{\alpha}{\lambda}\left(1-e^{-\lambda x}\right).$
Again, using $\lim_{x\to\infty}F(x)=1,$
it follows that $\alpha=\lambda$. Consequently, \(F(x)=1-e^{-\lambda x},\)
which is the cumulative distribution function of an exponential distribution.

Conversely, if $X$ follows an exponential distribution with parameter $\lambda$, then as shown in Example \ref{exam2.1},
\(f(F^{-1}(u))=\lambda-\lambda u,\)
which is an affine function of $u$.
This completes the proof.
\end{proof}
As illustrated by Example \ref{exam2.2}, the varjinaccuracy of the order statistic $X_{i:n}$ under the Weibull distribution admits an explicit representation; however, a closed-form expression is generally not available. This observation motivates the derivation of an upper bound for $\operatorname{VarJ}(X_{i:n},X)$, which is established in the following theorem.
\begin{theorem}
Suppose that $X$ follows a Weibull distribution with probability density
function
\[f(x)=\frac{a}{b}\left(\frac{x}{b}\right)^{a-1}
\exp\!\left[-\left(\frac{x}{b}\right)^a\right],\qquad x>0.\]
Then, the Chernoff upper bound for $\operatorname{VarJ}(X_{i:n},X)$ is
\[\operatorname{VarJ}(X_{i:n},X)\le\frac{an!}{4b^3(i-1)!(n-i)!}\sum_{k=0}^{i-1}
(-1)^k\binom{i-1}{k}\left[(a-1)^2A_0-2a(a-1)A_1+a^2A_2\right],\]
where, for $r=0, 1, 2$,
\[\begin{aligned}A_r=&\,\frac{\mu_{i:n}}{m_k}\Gamma\!\left(2-\frac{3}{a}+r\right)
\left[2^{-\left(2-\frac{3}{a}+r\right)}
-(m_k+2)^{-\left(2-\frac{3}{a}+r\right)}\right]
\\&-\frac{b\,\Gamma\!\left(3+r-\frac{2}{a}\right)}
{\left(1+\frac1a\right)(m_k+2)^{\,3+r-\frac{2}{a}}}\,{}_2F_1\!\left(1,\,3+r-\frac{2}{a};2+\frac1a;
\frac{m_k}{m_k+2}\right),
\end{aligned}\]
with
\[m_k=n-i+1+k,\]
and
\[\mu_{i:n}=b\Gamma\!\left(1+\frac1a\right)\frac{n!}{(i-1)!(n-i)!}\sum_{k=0}^{i-1}
(-1)^k\binom{i-1}{k}m_k^{-\left(1+\frac1a\right)}.\]
\end{theorem}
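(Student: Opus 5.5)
The plan is to read ``Chernoff upper bound'' as the Chernoff--Cacoullos-type variance inequality for a general absolutely continuous random variable. If $Y$ has density $h$, mean $\mu$ and support $(0,\infty)$, and $g$ is absolutely continuous, then
\[
\operatorname{Var}[g(Y)]\le E\big[\omega(Y)\,(g'(Y))^2\big],\qquad \omega(y)h(y)=\int_0^y(\mu-t)h(t)\,dt .
\]
I would apply this with $Y=X_{i:n}$, $h=f_{i:n}$ and $g=-\tfrac12 f$. By \eqref{e2.1} the left side is exactly $\operatorname{VarJ}(X_{i:n},X)$, so
\[
\operatorname{VarJ}(X_{i:n},X)\le \frac14\int_0^\infty (f'(y))^2\int_0^y(\mu_{i:n}-t)f_{i:n}(t)\,dt\,dy .
\]
This explains why the mean $\mu_{i:n}=E(X_{i:n})$ appears in the statement. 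As a first check, I would compute $\mu_{i:n}$ from \eqref{eq1.4}. Expanding $F^{i-1}=(1-\bar F)^{i-1}$ binomially gives $f_{i:n}=\frac{n!}{(i-1)!(n-i)!}\sum_k(-1)^k\binom{i-1}{k}\bar F^{m_k-1}f$. Each term is a rescaled Weibull density with scale $b\,m_k^{-1/a}$, so $\int_0^\infty t\,\bar F^{m_k-1}(t)f(t)\,dt=b\,\Gamma(1+1/a)\,m_k^{-(1+1/a)}/m_k$. I would reconcile these normalizations with the stated formula for $\mu_{i:n}$.

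Next I would compute $f'$ explicitly:
\[
f'(y)=f(y)\left[\frac{a-1}{y}-\frac{a y^{a-1}}{b^a}\right],
\]
so that
\[
(f')^2=f^2\left[(a-1)^2y^{-2}-2a(a-1)b^{-a}y^{a-2}+a^2b^{-2a}y^{2a-2}\right].
\]
These three pieces account for the coefficients $(a-1)^2$, $-2a(a-1)$, $a^2$ and define $A_0,A_1,A_2$ (index $r=0,1,2$). Inserting the binomial expansion of $f_{i:n}$ produces the outer constant $\frac{n!}{(i-1)!(n-i)!}\sum_k(-1)^k\binom{i-1}{k}$. For each $k$ and $r$ one then has to evaluate
\[
\int_0^\infty y^{\,p_r}e^{-2(y/b)^a}\left[\mu_{i:n}\int_0^y \bar F^{m_k-1}f\,dt-\int_0^y t\,\bar F^{m_k-1}f\,dt\right]dy .
\]
The first inner integral equals $(1-e^{-m_k(y/b)^a})/m_k$. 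Substituting $z=(y/b)^a$ then gives the first term of $A_r$: a Gamma function times $2^{-s}-(m_k+2)^{-s}$ with $s=2-\tfrac3a+r$, multiplied by $\mu_{i:n}/m_k$. I would track the powers of $a$ and $b$ carefully so that the prefactor $a/(4b^3)$ comes out.

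The main obstacle is the second piece, $\int_0^\infty y^{p_r}e^{-2(y/b)^a}\int_0^y t\,e^{-m_k(t/b)^a}(t/b)^{a-1}\,dt\,dy$. Its inner integral is a lower incomplete gamma $\gamma(1+\tfrac1a, m_k z)$. I would write the whole expression as $\int_0^\infty z^{s'-1}e^{-2z}\gamma(1+\tfrac1a,m_kz)\,dz$ and use the classical formula
\[
\int_0^\infty z^{\beta-1}e^{-pz}\gamma(\nu,cz)\,dz=\frac{c^\nu\Gamma(\nu+\beta)}{\nu(p+c)^{\nu+\beta}}\,{}_2F_1\!\left(1,\nu+\beta;\nu+1;\frac{c}{p+c}\right),
\]
with $\nu=1+\tfrac1a$, $p=2$ and $c=m_k$. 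This gives exactly the $\Gamma(3+r-\tfrac2a)/\big((1+\tfrac1a)(m_k+2)^{3+r-2/a}\big)\,{}_2F_1$ structure in the statement. What remains is to check the exponents and that the powers of $m_k$ cancel against the normalization. Convergence requires $2-\tfrac3a>0$ (for $r=0$), i.e. $a>3/2$, which I would note as an implicit assumption. Fubini is justified since all integrands are nonnegative after splitting the two pieces.
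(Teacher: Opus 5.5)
Your proposal is correct and follows the paper's proof essentially step for step: the Chernoff--Cacoullos upper bound applied to $X_{i:n}$ with $g=-\frac12 f$, the binomial expansion of $F^{i-1}$, the substitution $u=(x/b)^a$ giving the factor $\frac{a}{b^3}\left[(a-1)^2A_0-2a(a-1)A_1+a^2A_2\right]$, and a Gradshteyn--Ryzhik incomplete-gamma integral. The only cosmetic difference is that you apply the lower-incomplete-gamma formula to $\int_0^\infty z^{s-1}e^{-2z}\gamma(1+\frac1a,m_kz)\,dz$ directly, whereas the paper first swaps the order of integration to get $\Gamma(2-\frac3a+r,2v)$ and uses the upper-incomplete-gamma formula; both give the same ${}_2F_1(1,3+r-\frac2a;2+\frac1a;\frac{m_k}{m_k+2})$ term. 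One slip to fix: $\int_0^\infty t\,\bar F^{m_k-1}(t)f(t)\,dt=b\,\Gamma(1+\frac1a)\,m_k^{-(1+1/a)}$, i.e. the rescaled Weibull mean $b\,m_k^{-1/a}\Gamma(1+\frac1a)$ divided by $m_k$ once, not twice; this is exactly what makes the stated $\mu_{i:n}$ equal to $E(X_{i:n})$, which the Chernoff bound requires.
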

\begin{proof}
Applying the Chernoff inequality given in Equation (12) of \cite{Goodd} to the proposed inaccuracy measure based on varextropy yields
\begin{align}\label{eqine}
\operatorname{VarJ}(X_{i:n},X)\le\frac14\int_{0}^{+\infty}\left(f'(x)\right)^2
\left(\int_{0}^{x}(\mu_{i:n}-t)f_{i:n}(t)\,dt\right)dx.
\end{align}

Taking the logarithmic derivative of $f(x)$ and using Equation \eqref{eq1.4}, together with the binomial expansion,
\[\left(1-e^{-(x/b)^a}\right)^{i-1}=\sum_{k=0}^{i-1}(-1)^k\binom{i-1}{k}
e^{-k(x/b)^a},\]
we obtain 
\begin{align}\label{deriv}
f'(x)=\frac{a}{b^{2}}\left(\frac{x}{b}\right)^{a-2}
\left[(a-1)-a\left(\frac{x}{b}\right)^{a}\right]
\exp\!\left[-\left(\frac{x}{b}\right)^{a}\right],
\end{align}
and
\begin{align}
f_{i:n}(t)=\frac{n!}{(i-1)!(n-i)!}\left(\frac{a}{b}\right)
\left(\frac{t}{b}\right)^{a-1}
\sum_{k=0}^{i-1}{(-1)}^k{i-1\choose k}e^{-(n-i+1+k)({t}/{b})^a}.
\end{align}
 Thus, upon setting $m_k=n-i+1+k$, we obtain  
 \begin{align}\label{eqcher}
\int_{0}^x(\mu_{i: n}-t)f_{i: n}(t)dt&=\frac{n!}{(i-1)!(n-i)!}\sum_{k=0}^{i-1}{(-1)}^k{i-1\choose k}\left\{\mu_{i: n}\frac{1-e^{-m_k(x/b)^a}}{m_k}
\right.\nonumber\\&\left.-\int_{0}^{x}t\left(\frac{a}{b}\right){\left(\frac{t}{b}\right)}^{a-1}e^{-m_k(t/b)^a}dt\right\}.
 \end{align}
 Making the change of variable $u=m_k(t/b)^a$ and using the definition of the incomplete gamma function, 
 $\gamma(s, mz)=m^s\int_{0}^zv^{s-1}e^{-mv}dv$,
  we have
 \begin{align}\label{eqJ1}
 J_1(m_k, x)&=\int_{0}^xt\left(\frac{a}{b}\right){\left(\frac{t}{a}\right)}^{a-1}e^{-m_k(t/b)^a}dt=\frac{b}{m_k^{1+\frac{1}{a}}}\int_{0}^{m_k(x/b)^a}u^{1/a}e^{-u}du\nonumber\\&=\frac{b}{m_k^{1+\frac{1}{a}}}\gamma\left(1+\frac{1}{a}, m_k\left(\frac{x}{b}\right)^a\right).
 \end{align}
 Now 
 \begin{align}
 \mu_{i:n}&=\frac{n!}{(i-1)!(n-i)!}\sum_{k=0}^{i-1}{(-1)}^k{i-1\choose k}\int_{0}^{+\infty}x\left(\frac{a}{b}\right){\left(\frac{x}{b}\right)}^{a-1}e^{-m_k(x/b)^a}dx\nonumber\\
 &=\frac{n!}{(i-1)!(n-i)!}\sum_{k=0}^{i-1}{(-1)}^k{i-1\choose k}J_1(m_k, \infty)
 \nonumber\\&=b\Gamma\!\left(1+\frac1a\right)
\frac{n!}{(i-1)!(n-i)!}\sum_{k=0}^{i-1}(-1)^k\binom{i-1}{k}
m_k^{-\left(1+\frac1a\right)}.
 \end{align}
Substituting \eqref{eqJ1} into \eqref{eqcher} and 
then substituting the resulting expression into the Chernoff inequality \eqref{eqine}, we obtain 
\begin{equation}
\operatorname{VarJ}(X_{i:n},X)\le\frac{n!}{4(i-1)!(n-i)!}\sum_{k=0}^{i-1}(-1)^k\binom{i-1}{k}I_k.
\end{equation}
where 
\begin{align}
I_k=\int_{0}^{+\infty}(f^{'}(x))^2\left[\frac{\mu_{i:n}}{m_k}(1-e^{-m_k(x/b)^a})-\frac{b}{m_k^{1+\frac{1}{a}}}\gamma\left(1+\frac{1}{a},m_k\left(\frac{x}{b}\right)^a\right)\right]dx.
\end{align}
Expanding the quadratic polynomial \(\left[(a-1)-a\left(\frac{x}{b}\right)^a\right]^2\) and making the change of variable 
$u=\left(\frac{x}{b}\right)^a,$
we can decompose the integral $I_k$ as


\[I_k=\frac{a}{b^3}\left[(a-1)^2A_0-2a(a-1)A_1+a^2A_2\right],\]
where,
\begin{align*}
A_r&=\int_{0}^{+\infty}u^{1-\frac{3}{a}+r}e^{-2u}\left[\frac{\mu_{i:n}}{m_k}(1-e^{-m_ku})-\frac{b}{m_k^{1+\frac{1}{a}}}
\gamma\left(1+\frac{1}{a},m_ku\right)^a\right]du\\&=
\frac{\mu_{i:n}}{m_k}\Gamma\!\left(2-\frac3a+r\right)
\left[2^{-\left(2-\frac3a+r\right)}-(m_k+2)^{-\left(2-\frac3a+r\right)}\right]\\&
-b\int_{0}^{+\infty}
u^{1-\frac{3}{a}+r}e^{-2u}\left(\int_{0}^{u}v^{{1}/{a}}e^{-mv}dv\right)du\\&=
\frac{\mu_{i:n}}{m_k}\Gamma\!\left(2-\frac3a+r\right)
\left[2^{-\left(2-\frac3a+r\right)}-(m_k+2)^{-\left(2-\frac3a+r\right)}\right]\\&
-b\int_{0}^{+\infty}v^{{1}/{a}}e^{-mv}\left(\int_{v}^{+\infty}u^{1-\frac{3}{a}+r}e^{-2u}du\right)dv
\\&=
\frac{\mu_{i:n}}{m_k}\Gamma\!\left(2-\frac3a+r\right)
\left[2^{-\left(2-\frac3a+r\right)}-(m_k+2)^{-\left(2-\frac3a+r\right)}\right]\\&
-\frac{b}{2^{2-\frac{3}{a}+r}}\int_{0}^{+\infty}v^{{1}/{a}}e^{-mv}\Gamma\left(2-\frac{3}{a}+r, 2v\right)dv.
\end{align*}

Using the identity given in  \cite{Gradshteyn} p. 657,
\[\int_{0}^{+\infty}x^{\mu-1}e^{-\beta x}\Gamma(\nu,\alpha x)\,dx
=\frac{\alpha^\nu\Gamma(\mu+\nu)}{\mu(\alpha+\beta)^{\mu+\nu}}\,{}_2F_1\!\left(1,\mu+\nu;\mu+1;
\frac{\beta}{\alpha+\beta}\right).\]
Setting
\[\mu=1+\frac1a,\qquad\nu=2-\frac3a+r,\qquad\alpha=2,\qquad\beta=m_k,\]
in the above identity yields
\[\begin{aligned}A_r=&\,\frac{\mu_{i:n}}{m_k}\Gamma\!\left(2-\frac3a+r\right)
\left[2^{-\left(2-\frac3a+r\right)}-(m_k+2)^{-\left(2-\frac3a+r\right)}\right]\\&-
\frac{b\,\Gamma\!\left(3+r-\frac2a\right)}{\left(1+\frac1a\right)
(m_k+2)^{3+r-\frac2a}}\,{}_2F_1\!\left(1,\,3+r-\frac2a;2+\frac1a;\frac{m_k}{m_k+2}\right).\end{aligned}\]

Substituting the obtained expression for $A_r$ into $I_k$, and subsequently  into the above Chernoff upper bound, yields the desired result.
\end{proof}

\begin{theorem}\label{2th}
Let $X$ be a non-negative continuous random variable with cumulative distribution function $F(x)$ and probability
density function $f(x)$. Define the random variable $Y=\xi(X)$,
where $\xi(\cdot)$ is a strictly increasing and differentiable function with derivative $\xi^{'}(x)$. Furthermore, let $G(y)$ and $g(y)$ denote the cumulative distribution
function and the probability density function of \(Y\), respectively. Also, let $X_{i:n}$ and $Y_{i:n}$ 
represent the $i{th}$ order statistics corresponding to $X$ and $Y$, with associated probability density 
functions \(f_{i:n}(x)\) and \(g_{i:n}(y)\), respectively. Then
\begin{align}
\operatorname{VarJ}(Y_{i:n},Y)=\frac{1}{4}\left[E_{f_{i:n}}\left(\frac{f^{2}(X)}{\left(\xi'(X)\right)^{2}}\right)
-\left(E_{f_{i:n}}\left(\frac{f(X)}{\xi'(X)}\right)
\right)^{2}\right].
\end{align}
\end{theorem}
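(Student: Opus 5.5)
The plan is to evaluate the defining expression \eqref{e2.1} for the pair $(Y_{i:n},Y)$ and then change variables back to $X$ through $y=\xi(x)$. We will rewrite both $g$ and $g_{i:n}$ in terms of $f$, $F$ and $\xi'$ before integrating.

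First we record the distributional facts. Since $\xi$ is strictly increasing and differentiable, $G(y)=F(\xi^{-1}(y))$. Hence
\[
g(y)=\frac{f(\xi^{-1}(y))}{\xi'(\xi^{-1}(y))}.
\]
Substituting into \eqref{eq1.4} gives
\[
g_{i:n}(y)=\frac{1}{B(i,n-i+1)}F^{i-1}(\xi^{-1}(y))\bar F^{\,n-i}(\xi^{-1}(y))\frac{f(\xi^{-1}(y))}{\xi'(\xi^{-1}(y))}=\frac{f_{i:n}(\xi^{-1}(y))}{\xi'(\xi^{-1}(y))}.
\]
Equivalently, $Y_{i:n}=\xi(X_{i:n})$ in distribution, because a strictly increasing map preserves the ordering of the sample.

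Next we use the definition
\[
\operatorname{VarJ}(Y_{i:n},Y)=\frac14\left[\int g_{i:n}(y)g^2(y)\,dy-\left(\int g_{i:n}(y)g(y)\,dy\right)^2\right].
\]
In each integral we substitute $y=\xi(x)$, so $dy=\xi'(x)\,dx$ and the range of integration becomes the support of $X$. In the first integral, the factor $1/\xi'(x)$ coming from $g_{i:n}$ cancels the Jacobian $\xi'(x)$. This leaves
\[
\int f_{i:n}(x)\,\frac{f^2(x)}{(\xi'(x))^2}\,dx=E_{f_{i:n}}\!\left[\frac{f^2(X)}{(\xi'(X))^2}\right].
\]
The same substitution in the second integral gives $E_{f_{i:n}}\!\left[f(X)/\xi'(X)\right]$. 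Putting the two pieces into the definition yields the stated formula.

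A cleaner way to see the same identity is to write $\operatorname{VarJ}(Y_{i:n},Y)=\operatorname{Var}\left[-\tfrac12 g(Y_{i:n})\right]$. Using $Y_{i:n}\stackrel{d}{=}\xi(X_{i:n})$, we have $g(\xi(X_{i:n}))=f(X_{i:n})/\xi'(X_{i:n})$, and the result follows from expanding the variance.

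The only delicate step is justifying the change of variables, which requires $\xi'>0$ on the support. A strictly increasing differentiable function can still have $\xi'=0$ at isolated points. Such points form a set of $f$-measure zero (they would otherwise create atoms in $g$), and we assume, as is implicit in the statement, that the expectations involved are finite. Everything else is routine calculus.
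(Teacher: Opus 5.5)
Your proof is correct and is essentially the paper's argument: you obtain $g$ (and hence $g_{i:n}=f_{i:n}(\xi^{-1}(y))/\xi'(\xi^{-1}(y))$) from $G=F\circ\xi^{-1}$, insert these into the two integrals defining $\operatorname{VarJ}(Y_{i:n},Y)$, and substitute $x=\xi^{-1}(y)$ so that the Jacobian cancels one factor of $1/\xi'$; the paper also notes $Y_{i:n}=\xi(X_{i:n})$. Your extra remark about the zeros of $\xi'$ goes beyond the paper, which ignores this issue. One small correction there: a set of positive $f$-measure on which $\xi'=0$ would give $Y$ a singular continuous component, not atoms, but the assumed existence of the density $g$ excludes this in either case.
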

\begin{proof}
Since $Y=\xi(X)$, where $\xi$ is a strictly increasing differentiable function, we have 
$G(y)=F(\xi^{-1}(y))$ and 
differentiating both sides with respect to $y$ and using the identity
$(\xi^{-1})'(y)=\frac{1}{\xi'(\xi^{-1}(y))},$ it follows that
\(g(y)=\frac{f(\xi^{-1}(y))}{\xi^{'}(\xi^{-1}(y))}.\)
Also $Y_{i:n}=\xi(X_{i:n})$, thus
\begin{align}
\operatorname{VarJ}(Y_{i:n}, Y)&=
\operatorname{Var}_{g_{i: n}}\left[-\frac{1}{2}g(Y)\right]\nonumber\\&=\frac{1}{4}\left[\int_{0}^{+\infty}\frac{1}{B(i, n-i+1)}G^{i-1}(y)
{(1-G(y))}^{n-i}g^3(y)dy\right.\nonumber\\&\left.-{\left(\int_{0}^{+\infty}\frac{1}{B(i, n-i+1)}G^{i-1}(y)
{(1-G(y))}^{n-i}g^2(y)dy\right)}^2\right]\nonumber
\\&=
\frac{1}{4}\left[\int_{0}^{+\infty}\frac{1}{B(i, n-i+1)}{\left[F(\xi^{-1}(y))\right]}^{i-1}
{\left[1-F(\xi^{-1}(y))\right]}^{n-i}\left[\frac{f(\xi^{-1}(y))}{\xi^{'}(\xi^{-1}(y))}\right]^3dy\right.\nonumber\\&\left.-{\left(\int_{0}^{+\infty}
\frac{1}{B(i, n-i+1)}
{\left[F(\xi^{-1}(y))\right]}^{i-1}
{\left[1-F(\xi^{-1}(y))\right]}^{n-i}
\left[\frac{f(\xi^{-1}(y))}{\xi^{'}(\xi^{-1}(y))}\right]^2dy\right)}^2\right].\nonumber
\end{align}
Now, substituting $x=\xi^{-1}(y)$, such that $dy=\xi^{'}(x)dx$, we have
\begin{align}
\operatorname{VarJ}(Y_{i:n}, Y)&=
\frac{1}{4}\left[\int_{0}^{+\infty}\frac{1}{B(i, n-i+1)}{\left[F(x)\right]}^{i-1}
{\left[1-F(x)\right]}^{n-i}\left[\frac{f(x)}{\xi^{'}(x)}\right]^2f(x)dx\right.\nonumber\\&\left.-{\left(\int_{0}^{+\infty}
\frac{1}{B(i, n-i+1)}
{\left[F(x)\right]}^{i-1}
{\left[1-F(x)\right]}^{n-i}
\left[\frac{f(x)}{\xi^{'}(x)}\right]f(x)dx\right)}^2\right]\nonumber
\\
&=\frac{1}{4}\left[\int_{0}^{+\infty}\left[\frac{f(x)}{\xi^{'}(x)}\right]^2f_{i:n}(x)dx\right.
\left.-{\left(\int_{0}^{+\infty}
\left[\frac{f(x)}{\xi^{'}(x)}\right]f_{i:n}(x)dx\right)}^2\right],\nonumber
\end{align}
which completes the proof. 
\end{proof}
The next corollary shows that the proposed varjinaccuracy measure is invariant under location transformations but not under scale transformations. 
%
%
\begin{corollary}
Let $X$ be a nonnegative continuous random variable with cumulative distribution function $F(x)$ and probability density function $f(x)$. Suppose that
\[Y=aX+b,\]
where $a>0$ and $b\in\mathbb{R}$. Let $X_{i:n}$ and $Y_{i:n}$ denote the corresponding $i$th order statistics. Then,
\[\operatorname{VarJ}(Y_{i:n},Y)
=\frac{1}{a^{2}}
\operatorname{VarJ}(X_{i:n},X).\]
\end{corollary}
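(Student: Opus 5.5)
The plan is to apply Theorem \ref{2th} with the affine map $\xi(x)=ax+b$. This map is strictly increasing and differentiable because $a>0$, and its derivative is constant, $\xi'(x)=a$. Substituting into the formula of Theorem \ref{2th}, the factor $1/\xi'(X)=1/a$ is deterministic, so it comes out of both expectations:
\[
E_{f_{i:n}}\!\left(\frac{f^2(X)}{a^2}\right)=\frac{1}{a^2}E_{f_{i:n}}\!\left(f^2(X)\right),\qquad
\left(E_{f_{i:n}}\!\left(\frac{f(X)}{a}\right)\right)^2=\frac{1}{a^2}\left(E_{f_{i:n}}\!\left(f(X)\right)\right)^2 .
\]
The common factor $1/a^2$ then multiplies the bracket $\frac14\bigl[E_{f_{i:n}}(f^2(X))-(E_{f_{i:n}}f(X))^2\bigr]$. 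By the definition \eqref{e2.1}, this bracket is exactly $\operatorname{VarJ}(X_{i:n},X)$, which gives the claimed identity.

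The only point needing care is the support. Theorem \ref{2th} is stated for nonnegative variables with integrals over $(0,\infty)$, while $Y=aX+b$ may take negative values when $b<0$. I would handle this by noting that the change of variables $y=\xi(x)$ in the proof of Theorem \ref{2th} works verbatim on the support $\xi(S_X)=aS_X+b$. I would make this explicit in one line: $g(y)=\frac1a f\!\left(\frac{y-b}{a}\right)$ and $G(y)=F\!\left(\frac{y-b}{a}\right)$, and $Y_{i:n}=aX_{i:n}+b$.

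As an alternative that avoids the issue entirely, I can argue directly from the variance form:
\[
\operatorname{VarJ}(Y_{i:n},Y)=\operatorname{Var}\!\left[-\tfrac12 g(Y_{i:n})\right]=\operatorname{Var}\!\left[-\tfrac{1}{2a}f(X_{i:n})\right]=\tfrac{1}{a^2}\operatorname{Var}\!\left[-\tfrac12 f(X_{i:n})\right].
\]
This uses only $g(aX_{i:n}+b)=f(X_{i:n})/a$ and the scaling property of variance. It also makes plain that the location parameter $b$ drops out.
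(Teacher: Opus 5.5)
Your proof is correct and is essentially the paper's argument: apply Theorem~\ref{2th} with $\xi(x)=ax+b$, note that $\xi'(x)=a$ is constant, and factor $1/a^{2}$ out of both terms of the bracket. Your remark about the support when $b<0$, and your direct variance argument via $g(aX_{i:n}+b)=f(X_{i:n})/a$, are valid refinements that the paper does not spell out.
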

\begin{proof}
Since $\xi(x)=ax+b$, it follows that $\xi^{'}\!(x)=a$. Therefore, by Theorem \ref{2th},
\begin{align*}
\operatorname{VarJ}(Y_{i:n},Y)
&=
\frac{1}{4}
\left\{
E_{f_{i:n}}\!\left[\left(\frac{f(X)}{a}\right)^2\right]-\left(E_{f_{i:n}}\!\left[\frac{f(X)}{a}\right]\right)^2\right\}
\\&=\frac{1}{a^{2}}
\operatorname{VarJ}(X_{i:n},X).
\end{align*}
This completes the proof.
\end{proof}
\begin{remark}
Let $X$ be a nonnegative absolutely continuous random variable, and let
$X_{i:n}$ denote the $i$th order statistic from a random sample of size $n$.
Assume that $X_{i:n}$ belongs to the Integrated Pearson family with quadratic
function $q(\cdot)$. Then, applying inequality (3.5) of \cite{Goodarzi} (see Remark 3.6 of therein) with 
$g(x)=f(x)$, we obtain
\begin{align}\label{e2.5}
\operatorname{VarJ}(X_{i:n},X)
&=\frac{1}{4}\operatorname{Var}_{f_{i:n}}\!\left(f(X)\right)\nonumber\\
&\ge\frac{1}{4}\sum_{k=1}^{n}\frac{\left(E_{f_{i:n}}\left[q^{k}(X)\,f^{(k)}(X)\right]\right)^{2}}
{k!\,E_{f_{i:n}}\left[q^{k}(X)\right]\displaystyle\prod_{j=k-1}^{2k-2}(1-j\delta)},
\end{align}
where $f^{(k)}$ denotes the $k$th derivative of the parent density function $f(\cdot)$, whereas $q(x)$ and $\delta$ correspond to  density $f_{i:n}(\cdot)$.
\end{remark}
\begin{example}
For illustration, let the parent random variable \(X\) follow a Beta
distribution with parameters $a$ and $1$, with probability density
function
\[
f(x)=a x^{a-1}, \qquad 0<x<1,\quad a>0.
\]
It follows that $X_{n:n}$  follows a beta distribution with parameters $an$ and $1$.
For the density \(f_{n:n}(\cdot)\), the corresponding function $q(\cdot)$ and coefficient $\delta$ are given by
\[q(x)=\frac{x(1-x)}{an+1}, \qquad\delta=-\frac{1}{an+1}.\]
Moreover,
\[f^{(k)}(x)=a\prod_{r=1}^{k}(a-r)x^{a-k-1},\]
which gives
\[E_{f_{n:n}}\!\left[q^{k}(X)f^{(k)}(X)\right]=\frac{a^2n\displaystyle\prod_{r=1}^{k}(a-r)}
{(an+1)^k}B\!\left(a(n+1)-1,k+1\right)\]
and
\[E_{f_{n:n}}\!\left[q^{k}(X)\right]=\frac{an}{(an+1)^k}B(an+k,k+1).\]
Substituting these expressions into the lower bound \eqref{e2.5} and using
\(B(x,k+1)=k!\Gamma(x)/\Gamma(x+k+1)\), we obtain
\[
\begin{aligned}
\operatorname{VarJ}(X_{n:n},X)
\geq{}&
\frac{a^3n}{4}
\sum_{k=1}^{n}
\frac{
\Gamma^2\!\left(a(n+1)-1\right)
\Gamma(an+2k+1)
\left[\displaystyle\prod_{r=1}^{k}(a-r)\right]^2
}{
\Gamma^2\!\left(a(n+1)+k\right)
\Gamma(an+k)
\displaystyle\prod_{j=k-1}^{2k-2}(an+1+j)
}.
\end{aligned}
\]
\end{example}
\begin{theorem}
For a nonnegative continuous random variable $X$, let $X_{1:n}$, $X_{2:n},$ $\ldots,$ $X_{n:n}$ denote the order statistics from a random sample of size $n$. Then, the average value of the proposed inaccuracy the $i$th order statistics and the
parent random variable based on varextropy is bounded above by the varextropy of the parent random variable $X$, that is,
\[\frac{1}{n}\sum_{i=1}^{n}\operatorname{VarJ}(X_{i:n},X)\leq\operatorname{VarJ}(X).\]
\end{theorem}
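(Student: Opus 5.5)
The natural route is the classical identity for order statistics, $\sum_{i=1}^{n} f_{i:n}(x)=n f(x)$ for every $x$. It follows from \eqref{eq1.4} and the binomial theorem: summing $\binom{n-1}{i-1}F^{i-1}(x)\bar F^{n-i}(x)$ over $i$ gives $(F+\bar F)^{n-1}=1$. Its consequence is that the uniform mixture $\frac1n\sum_i f_{i:n}$ is exactly the parent density $f$. With this in hand, the inequality becomes the statement that the average of the conditional variances of $-\frac12 f(X)$ under the component laws is at most the variance under the mixture. That is the law of total variance, equivalently the concavity of variance under mixing.

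The steps are as follows. First, write each term using \eqref{e2.1}:
\[
\operatorname{VarJ}(X_{i:n},X)=\frac14\int_0^{+\infty} f_{i:n}(x)f^2(x)\,dx-\frac14 m_i^2,\qquad m_i=\int_0^{+\infty} f_{i:n}(x)f(x)\,dx .
\]
Second, average over $i$ and apply the identity to the first integral. This gives
\[
\frac1n\sum_{i=1}^n\frac14\int_0^{+\infty} f_{i:n}(x)f^2(x)\,dx=\frac14\int_0^{+\infty} f^3(x)\,dx=\frac14E\bigl(f^2(X)\bigr).
\]
The same identity gives $\frac1n\sum_i m_i=\int_0^{+\infty} f^2(x)\,dx=E f(X)$, so that $\frac14\bigl(\frac1n\sum_i m_i\bigr)^2=J^2(X)$ by \eqref{eqve}. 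Third, compute the difference:
\[
\operatorname{VarJ}(X)-\frac1n\sum_{i=1}^n\operatorname{VarJ}(X_{i:n},X)=\frac14\left[\frac1n\sum_{i=1}^n m_i^2-\Bigl(\frac1n\sum_{i=1}^n m_i\Bigr)^2\right].
\]
This quantity is nonnegative by the Cauchy--Schwarz inequality (or Jensen's inequality for $t\mapsto t^2$), which proves the claim. The argument also identifies the gap as $\frac14$ times the empirical variance of the numbers $m_i=-2J(X_{i:n},X)$.

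There is no serious obstacle. The only points needing care are integrability and the justification of the mixture identity. I would assume $\int f^3<\infty$, so that $\operatorname{VarJ}(X)$ is finite. In that case every $\int f_{i:n}f^2$ is finite, because $f_{i:n}\le n f$ pointwise, and interchanging the finite sum with the integrals is immediate. If $\int f^3=\infty$, the right-hand side is infinite and the inequality holds trivially.
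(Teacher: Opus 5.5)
Your proposal is correct and follows the paper's proof essentially step for step. Like the paper, you average using $\frac1n\sum_{i} f_{i:n}=f$, reduce the first term to $\frac14\int f^3$, and apply Jensen's inequality to the numbers $m_i=\int f\,f_{i:n}$ (the paper's $a_i$); your integrability remark via $f_{i:n}\le nf$ and your identification of the gap as $\frac14$ times the empirical variance of the $m_i$ are sound additions the paper does not spell out.
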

\begin{proof}
Taking the average over $i=1, \ldots, n$ and using the identity
\[\frac{1}{n}\sum_{i=1}^{n}f_{i:n}(x)=f(x),\]
we obtain
\begin{align*}
\frac{1}{n}\sum_{i=1}^{n}\operatorname{VarJ}(X_{i:n},X)
=\frac14\left[
\int_{0}^{+\infty}f^{3}(x)\,dx-\frac1n\sum_{i=1}^{n}
\left(\int_{0}^{+\infty}f(x)f_{i:n}(x)\,dx\right)^2\right].
\end{align*}
Let \(a_i=\int_{0}^{+\infty}f(x)f_{i:n}(x)\,dx,\)
so that
\(\frac1n\sum_{i=1}^{n}a_i=\int_{0}^{+\infty}f^{2}(x)\,dx.\)
Since $x^2$ is a convex function, Jensen's inequality yields
\[\frac1n\sum_{i=1}^{n}a_i^2\ge\left(\frac1n\sum_{i=1}^{n}a_i\right)^2=
\left(\int_{0}^{+\infty}f^{2}(x)\,dx\right)^2.\]
Therefore,
\[\frac1n\sum_{i=1}^{n}\operatorname{VarJ}(X_{i:n},X)\le
\frac14\left[\int_{0}^{+\infty}f^{3}(x)\,dx-\left(\int_{0}^{+\infty}f^{2}(x)\,dx\right)^2\right]=\operatorname{VarJ}(X),\]
which completes the proof.
\end{proof}
\section{Dynamic Residual Inaccuracy Measure for Order Statistics Based on varextropy}\label{Section3}
In survival analysis and life-testing, the uncertainty associated with a system may depend on its 
survival up to a given time \(t\). This motivates the consideration of dynamic measures that account for the residual lifetime beyond \(t\). In this section, we extend the varjinaccuracy measure to the residual lifetime of order statistics and define its dynamic version based on varextropy as follows.
\begin{definition}
The dynamic residual inaccuracy measure for order statistics based on varextropy is defined as
\begin{equation}\label{v3.1}
\operatorname{VarJ}(X_{i:n},X;t)
=\frac{1}{4}\left\{
\int_{t}^{+\infty}\left(\frac{f(x)}{\bar{F}(t)}
\right)^2\frac{f_{i:n}(x)}{\bar{F}_{i:n}(t)}\,dx-
\left(\int_{t}^{+\infty}\frac{f(x)}{\bar{F}(t)}
\frac{f_{i:n}(x)}{\bar{F}_{i:n}(t)}\,dx\right)^2\right\},
\end{equation}
where
\[\bar{F}_{i:n}(t)=1-F_{i:n}(t),\]
denotes the survival function of the $i$th order statistic, that given by
\begin{align}
\bar{F}_{i:n}(t)=\frac{\int_{F(t)}^1u^{i-1}(1-u)^{n-i}du}{B({i, n-i+1})}=
\frac{\bar B_{F(t)}(i, n-i+1)}{B({i, n-i+1})}.
\end{align}
\end{definition}
\begin{example}
Assume that $X$ follows the exponential distribution with probability density function given in Example \ref{exam2.2}. Then, after some straightforward calculations, it can be shown that
\begin{equation}
\operatorname{VarJ}(X_{i:n},X;t)
=
\frac{\lambda^{2}}{4}
\frac{i(n-i+1)}
{(n+1)^{2}(n+2)},
\qquad i=1,2,\ldots,n.
\end{equation}

As can be seen,
\[\operatorname{VarJ}(X_{i:n},X;t)=\operatorname{VarJ}(X_{i:n},X),\]
which shows that the proposed dynamic residual varextropy-based inaccuracy measure is independent of the truncation time $t$.
\end{example}
\begin{example}\label{exam3.3}
Assume that the random variable $X$ follows the beta distribution with probability density function
\[
f(x)=a(1-x)^{a-1}, \qquad a>1,\; 0<x<1.
\]
Next, straightforward algebraic manipulation gives 
\begin{equation}\label{e3.4}
\operatorname{VarJ}(X_{1:n},X;t)
=
\frac{na^{3}}{4}
\left[
\frac{1}{an+2a-2}-
\frac{na}{(an+a-1)^{2}}
\right]
(1-t)^{-2}.
\end{equation}
It follows that the proposed dynamic residual varextropy-based inaccuracy measure explicitly depends on the truncation time $t$. Consequently,
\[
\operatorname{VarJ}(X_{1:n},X;t)
\neq
\operatorname{VarJ}(X_{1:n},X).
\]
\begin{figure}[ht]
\centering
\begin{minipage}{0.48\textwidth}
    \centering
    \includegraphics[width=\textwidth,height=6cm]{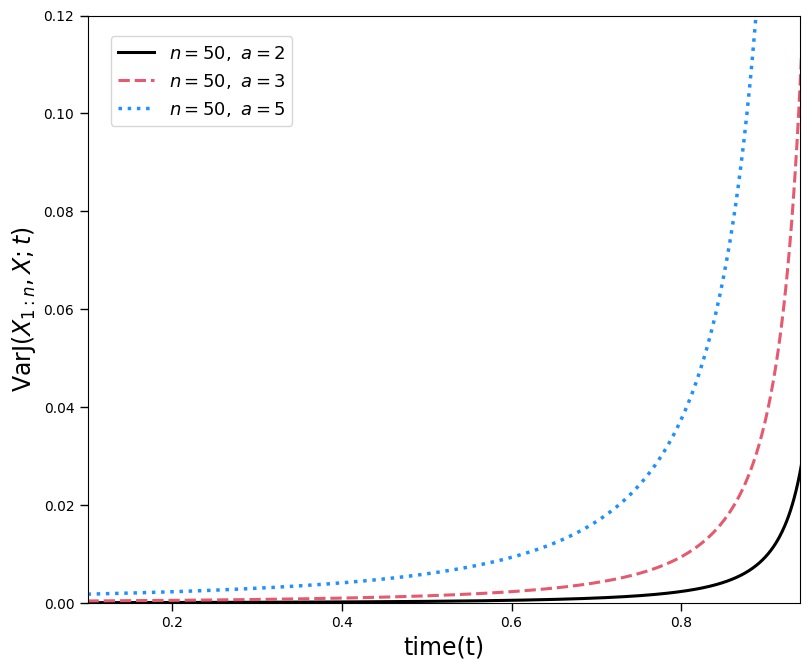}
\end{minipage}
\hfill
\begin{minipage}{0.48\textwidth}
    \centering
    \includegraphics[width=\textwidth,height=6cm]{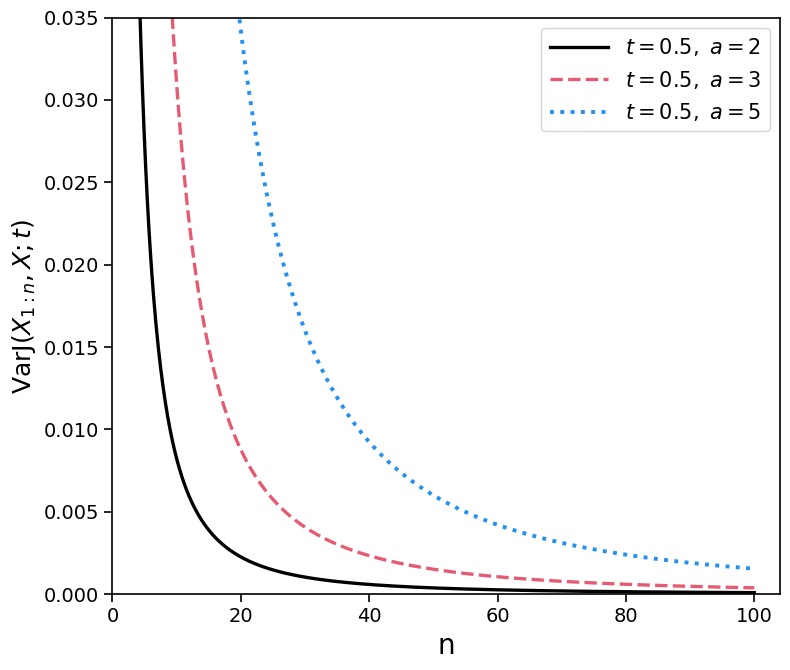}
\end{minipage}

\vspace{-0.2cm}
\caption{\footnotesize{Plots illustrating the behavior of $\mathrm{VarJ}(X_{1:n},X;t)$ for different values of the parameter $a$ as a function of time $t$ (left panel) and sample size $n$ (right panel) in Example \ref{exam3.3}.}}
\label{fig2}
\end{figure}

Figure \ref{fig2} illustrates the behaviour of the proposed dynamic residual varextropy-based inaccuracy measure for different values of the parameter $a$. The left panel of Figure \ref{fig2} shows that $\mathrm{VarJ}(X_{1:n},X;t)$ increases with increasing time $t$ for a fixed sample size $n$, with 
larger values of $a$ resulting in higher $\mathrm{VarJ}(X_{1:n},X;t)$ values. The right panel of Figure  \ref{fig2} shows that $\mathrm{VarJ}(X_{1:n},X;t)$ 
decreases as the sample size $n$ increases for a fixed value of $t$, and tends to zero as $n$ becomes large.

We also establish analytically that Equation \eqref{e3.4} is decreasing with respect to n. 
\begin{proof}
Let
\[
V_n=
\frac{a^3}{4(1-t)^2}
\left[
\frac{n}{an+2a-2}
-
\frac{an^2}{(an+a-1)^2}
\right],
\qquad a>1,
\]
denote $\operatorname{VarJ}(X_{1:n},X;t)$. To prove that $V_n$ is decreasing in $n$, it suffices to show that
\[
V_{n+1}\le V_n.
\]

After straightforward algebraic manipulations, this inequality is equivalent to
\[
\frac{(an+2a-2)(an+a-1)^2}
{(an+3a-2)(an+2a-1)^2}
\le
\frac{n}{n+1},
\]
or, equivalently,
\[
n(an+3a-2)(an+2a-1)^2
-
(n+1)(an+2a-2)(an+a-1)^2
\ge0.
\]

Expanding the left-hand side gives
\[P(n)=
2a^3n^3+7a^3n^2+5a^3n-4a^2n^2-4a^2n-2a^3+6a^2-6a+2.\]

Instead of analyzing $P(n)$ directly, we show that it is increasing with respect to $n$. Indeed,
\[P(n+1)-P(n)=2a^2(n+1)(3an+7a-4).\]
Since
\[3an+7a-4\ge 3a+7a-4=10a-4>0,\]
hence,
\[P(n+1)-P(n)>0,\]
which shows that $P(n)$ is strictly increasing in $n$. Consequently,
\[P(n)\ge P(1),\qquad n\ge1.\]

Moreover, \(P(1)=2(2a-1)(3a^2+a-1),\)
which is strictly positive for every $a>1$. Therefore,
\[P(n)\ge P(1)>0,\qquad n\ge1.\]

Thus,
\[n(an+3a-2)(an+2a-1)^2\ge (n+1)(an+2a-2)(an+a-1)^2,\]
which is equivalent to $V_{n+1}\le V_n.$ Hence, $\operatorname{VarJ}(X_{1:n},X;t)$ is a decreasing function of $n$.
\end{proof}
\end{example}
%
%
In following theorem, we aim to establish a relationship between the 
mean residual lifetime of a unit and the 
mean lifetime of a parallel system. The purpose of presenting 
this inequality is to obtain a lower bound for
$\operatorname{VarJ}(X_{i:n},X;t)$ based on the 
Chernoff bound of the dynamic varextropy of the 
underlying distribution.
\begin{proposition}
Let $X$ be a nonnegative absolutely continuous random variable with survival function $\overline F$, and assume that $\overline F(t)>0$. Let
\[m(t)=\frac{\displaystyle\int_t^{+\infty} \overline F(x)\,dx}{\overline F(t)}\]
denote the mean residual life function of $X$, and let
\[m_{n:n}(t)=\frac{\displaystyle\int_t^{+\infty}
\left[1-\left(1-\overline F(x)\right)^n\right]dx}
{1-\left(1-\overline F(t)\right)^n},\]
be the mean residual life function of the largest order statistic $X_{n:n}$. Then
\[m_{n:n}(t)\ge m(t),\]
for all $t$ such that $\overline F(t)>0$.
\end{proposition}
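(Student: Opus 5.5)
The plan is to compare the integrands pointwise after normalizing. Write $p=\overline F(t)\in(0,1]$ and, for $x\ge t$, $s=\overline F(x)\in[0,p]$. Since $\overline F$ is nonincreasing, every $x\ge t$ has $s\le p$. Set
\[
h(s)=\frac{1-(1-s)^n}{s},\qquad s\in(0,1],
\]
so that $1-(1-\overline F(x))^n=h(\overline F(x))\,\overline F(x)$.

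The key elementary fact is that $h$ is nonincreasing on $(0,1]$. This follows from $h(s)=\sum_{k=0}^{n-1}(1-s)^k$, a sum of nonincreasing functions of $s$. Equivalently, $1-(1-s)^n$ is concave and vanishes at $0$, so its chord slope from the origin decreases.

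Next I would compare the integrands. For every $x\ge t$ with $\overline F(x)>0$, monotonicity of $h$ gives $h(\overline F(x))\ge h(\overline F(t))$, that is,
\[
1-\bigl(1-\overline F(x)\bigr)^n\ \ge\ \frac{1-(1-\overline F(t))^n}{\overline F(t)}\,\overline F(x).
\]
This inequality also holds trivially where $\overline F(x)=0$, since both sides vanish there. Integrating over $x\in(t,\infty)$ gives
\[
\int_t^{\infty}\bigl[1-(1-\overline F(x))^n\bigr]dx\ \ge\ \frac{1-(1-\overline F(t))^n}{\overline F(t)}\int_t^{\infty}\overline F(x)\,dx.
\]
Dividing both sides by the positive quantity $1-(1-\overline F(t))^n$ yields $m_{n:n}(t)\ge m(t)$.

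The integrability is the only point needing a remark. If $\int_t^\infty\overline F<\infty$, then the left integral is also finite, because $1-(1-s)^n\le ns$. If $\int_t^\infty\overline F=\infty$, both sides are infinite and the inequality holds in the extended sense.

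The main (mild) obstacle is establishing the monotonicity of $h$ cleanly. I would use the geometric-sum identity, which avoids any differentiation and makes the rest of the argument immediate.
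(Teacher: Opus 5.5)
Your proof is correct and is essentially the paper's argument. You use the same chord-slope function (your $h$, the paper's $R(u)=(1-(1-u)^n)/u$), the same pointwise comparison $R(\overline F(x))\ge R(\overline F(t))$ for $x\ge t$, and then integrate and divide; the differences are minor: you get monotonicity from the identity $R(u)=\sum_{k=0}^{n-1}(1-u)^k$ rather than by differentiating $R$, and you explicitly handle points where $\overline F(x)=0$ (where $R$ is undefined) and the case of infinite integrals, both of which the paper leaves implicit.
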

%
%
\begin{proof}
Define
\[
\phi(u)=1-(1-u)^n,\qquad 0<u\le1,
\]
and
\[
R(u)=\frac{\phi(u)}{u}
=\frac{1-(1-u)^n}{u}.
\]

\medskip
\noindent
 Differentiating $R(u)$ gives
\[R'(u)
=\frac{nu(1-u)^{n-1}-1+(1-u)^n}{u^2}.\]
Now let
\[h(u)=nu(1-u)^{n-1}-1+(1-u)^n.\]
Since $h(0)=0,$ and
\[h'(u)=-n(n-1)u(1-u)^{n-2}\le0,\]
it follows that
\[
h(u)\le0,
\qquad 0<u\le1.
\]
Consequently, $R'(u)\le0,$
and therefore $R$ is decreasing on $(0,1]$.
Since $\phi(u)=uR(u),$ we may write
\[
m_{n:n}(t)
=
\frac{\displaystyle\int_t^{+\infty}
R(\overline F(x))\,\overline F(x)\,dx}
{R(\overline F(t))\,\overline F(t)}.
\]

Now, for every $x\ge t$,
\(
\overline F(x)\le \overline F(t).
\)
Since $R$ is decreasing, \(R(\overline F(x))\ge R(\overline F(t)).\)
Therefore,
\[
\int_t^{+\infty}
R(\overline F(x))\,\overline F(x)\,dx
\ge
R(\overline F(t))
\int_t^{+\infty}
\overline F(x)\,dx.
\]

Finally, dividing both sides by the positive quantity
\(
R(\overline F(t))\,\overline F(t),
\)
we obtain
\[m_{n:n}(t)\ge \frac{\displaystyle\int_t^{+\infty}\overline F(x)\,dx}
{\overline F(t)}=m(t),\]
which completes the proof.
\end{proof}
%
%
\begin{theorem}
Let $X$ be a nonnegative absolutely continuous random variable, and let $X_{1:n}$ denote the smallest order statistic from a random sample of size $n$. Then
\[\operatorname{VarJ}(X_{1:n},X;t)\le
n\,\operatorname{UpperBound}\!\left(\operatorname{VarJ}(X;t)\right).\]
\end{theorem}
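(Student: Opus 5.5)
For $i=1$ we have $\bar F_{1:n}(x)=\bar F^n(x)$ and $f_{1:n}(x)=n\bar F^{n-1}(x)f(x)$. Definition \eqref{v3.1} then says that $\operatorname{VarJ}(X_{1:n},X;t)$ is $\frac14$ times the variance of $f_t(Z)=f(Z)/\bar F(t)$. Here $Z$ has density
\[
\frac{f_{1:n}(x)}{\bar F_{1:n}(t)}=\frac{n\bar F^{n-1}(x)f(x)}{\bar F^n(t)},\qquad x>t,
\]
that is, $Z$ is the residual lifetime at $t$ of the series system. The Chernoff-type upper bound for the residual varextropy, in the form already used in the proof of the Weibull theorem via Equation (12) of \cite{Goodd}, is
\[
\operatorname{VarJ}(X;t)\le \frac14\int_t^{\infty}\Big(\frac{f'(x)}{\bar F(t)}\Big)^2\Big(\int_t^x (\mu_t-u)\frac{f(u)}{\bar F(t)}\,du\Big)dx=:\operatorname{UpperBound}(\operatorname{VarJ}(X;t)),
\]
where $\mu_t=E(X\mid X>t)$. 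My plan is to apply the same Chernoff inequality to $\operatorname{Var}(f_t(Z))$ and then compare the result with this bound, term by term.

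\textbf{Step 1.} Apply Chernoff's inequality $\operatorname{Var}\phi(Z)\le\int (\phi')^2(x)\,w_Z(x)\,dx$ with $\phi=f_t$. The Chernoff weight of the variable $Z$ is
\[
w_Z(x)=\int_t^x(\mu_{Z}-u)f_Z(u)\,du=\int_x^\infty(u-\mu_Z)f_Z(u)\,du,
\]
where $\mu_Z=t+m_{1:n}(t)$ is the mean of $Z$. This gives
\[
\operatorname{VarJ}(X_{1:n},X;t)\le\frac14\int_t^\infty\frac{(f'(x))^2}{\bar F^2(t)}\,w_Z(x)\,dx .
\]

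\textbf{Step 2.} Show pointwise that $w_Z(x)\le n\,w_{X_t}(x)$, where $w_{X_t}(x)=\int_x^\infty(u-\mu_t)f(u)/\bar F(t)\,du$. Rewrite both weights through survival functions: $w_Z(x)=\bar F_Z(x)\,(x+m_Z(x)-\mu_Z)$, and similarly for $X_t$. Then use two facts:
\begin{itemize}
\item $\bar F_Z(x)=\bar F^n(x)/\bar F^n(t)\le \bar F(x)/\bar F(t)$;
\item $f_Z(u)\le n f(u)/\bar F(t)$ pointwise on $u>t$.
\end{itemize}
The direct route is
\[
w_Z(x)=\int_x^\infty(u-\mu_Z)f_Z\,du\le n\int_x^\infty (u-\mu_Z)^+\frac{f(u)}{\bar F(t)}\,du .
\]
Converting this into $n\,w_{X_t}(x)$ requires replacing $\mu_Z$ by $\mu_t$. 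The inequality $\mu_Z\le\mu_t$ goes the wrong way for that. What is needed is the series-system analogue of the Proposition, namely $m_{1:n}(t)\le m(t)$, which follows by the same monotonicity-of-ratio argument. This suggests the argument is intended at the level of the bound formula: the $\mu$ terms are handled via the mean residual life comparison, and $f_Z\le nf_{X_t}$ handles the density.

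\textbf{Step 3.} Integrate Step 2 against $(f')^2/(4\bar F^2(t))$ to conclude.

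\textbf{Main obstacle.} The main obstacle is Step 2. The Chernoff weight involves the centered variable, so the density comparison alone does not suffice. I would first try the covariance representation $w(x)=F(x)\bar F(x)\,[\text{mean above }x-\text{mean below }x]$ and bound each factor separately, using $\bar F_Z\le\bar F_{X_t}$, $F_Z\le nF_{X_t}$, and a comparison of conditional means via the mean residual life result. If pointwise domination fails, I would settle for the bound with $\mu_t$ replaced by the series mean. This is presumably what the paper's ``UpperBound'' notation absorbs.
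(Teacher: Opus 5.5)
\textbf{The gap is Step 2.} Your Step 1 is the paper's first step. The whole content of the theorem is Step 2, and you do not prove it.

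\begin{itemize}
\item Your ``direct route'' stalls for the reason you give.
\item The next sentence does not rescue it. $m_{1:n}(t)\le m(t)$ is the same inequality as $\mu_Z\le\mu_t$, because $\mu_Z=t+m_{1:n}(t)$ and $\mu_t=t+m(t)$.
\item Your fallback, replacing $\mu_t$ by the series mean, would prove a different inequality, not the stated one.
\end{itemize}

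The paper's proof is exactly the termwise substitution you suspect. Inside the Chernoff weight $\int_t^x(\mu-s)f(s)\,ds$ it replaces $m_{1:n}(t)$ by $m(t)$ and $f^t_{1:n}$ by $nf^t$. Your doubt about this is well founded. In that form the mean replacement goes the right way. However, $(\mu_t-s)f^t_{1:n}(s)\le(\mu_t-s)\,nf^t(s)$ holds only for $s\le\mu_t$, so for $x>\mu_t$ the termwise bound is unjustified. In the form $\int_x^\infty(u-\mu)f(u)\,du$ the density step is fine but the mean step reverses. No centered form supports a term-by-term comparison.

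\textbf{The missing idea is to remove the centering before comparing.} Let $W$ be supported on $(t,\infty)$ with distribution function $F_W$, survival function $\bar F_W$ and finite mean. Integration by parts gives
\[
w_W(x)=\int_t^x(\mu_W-u)\,dF_W(u)=F_W(x)\int_x^\infty \bar F_W(u)\,du+\bar F_W(x)\int_t^x F_W(u)\,du .
\]
This is a sum of two nonnegative terms; it is your covariance representation with the conditional means left un-normalized.

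Now put $G(u)=\bar F(u)/\bar F(t)\in[0,1]$, so that $\bar F_Z=G^n$, $F_Z=1-G^n$, $\bar F_{X_t}=G$ and $F_{X_t}=1-G$. Two elementary bounds suffice: $G^n\le G$, and $1-G^n=(1-G)(1+G+\cdots+G^{n-1})\le n(1-G)$. They give
\[
F_Z(x)\int_x^\infty \bar F_Z\le n\,F_{X_t}(x)\int_x^\infty \bar F_{X_t},\qquad \bar F_Z(x)\int_t^x F_Z\le n\,\bar F_{X_t}(x)\int_t^x F_{X_t}.
\]
Hence $w_Z\le n\,w_{X_t}$ pointwise on $(t,\infty)$, and your Step 3 finishes the proof.

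The pairing of factors matters. No mean-residual-life comparison is needed. If you normalize into conditional means as you propose, the ``mean below $x$'' part compares the wrong way. The mean inactivity time of $Z$ is at least that of $X_t$, because $(1-a^n)/(1-b^n)\ge(1-a)/(1-b)$ whenever $0\le b\le a\le 1$ and $b<1$. So bounding each factor separately in that form fails. With the lemma above, your route becomes a rigorous proof, more careful than the paper's termwise substitution.
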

\begin{proof}
By Equations \eqref{v3.1} and (12) of \cite{Goodd}, we have
\[\operatorname{VarJ}(X_{1:n},X;t)\leq
\frac{1}{4}\int_{t}^{+\infty}\int_{t}^{x}\bigl(m_{1: n}(t)-s\bigr)\,f^{t}_{1:n}(s)\,
\left(\frac{f^{'}(x)}{\overline F(t)}\right)^2\,ds\,dx,\]
where
\[f^{t}_{1:n}(s)=n\left(\frac{\overline F(s)}
{\overline F(t)}\right)^{n-1}\frac{f(s)}{\overline F(t)}.\]
Since
\(\left(\frac{\overline F(s)}
{\overline F(t)}\right)^{n-1}
\le1,\)
it follows that
\[f^{t}_{1:n}(s)\le n\,\frac{f(s)}{\overline F(t)}=n\,f^{t}(s).\]

On the other hand, it is readily verified that $m_{1:n}(t)\leq m(t)$, therefore,
\begin{align}
\operatorname{VarJ}(X_{1:n},X;t)\nonumber
&\le \frac{n}{4}\int_{t}^{+\infty}
\int_{t}^{x}\bigl(m(t)-s\bigr)
\,f^{t}(s)\,
\left(\frac{f^{'}(x)}{\overline F(t)}\right)^2\,
ds\,dx\\&=
n\,\operatorname{UpperBound}\!\left(
\operatorname{VarJ}(X;t)
\right).
\end{align}
\end{proof}
\begin{theorem}
Let $X$ be a nonnegative absolutely continuous random variable, and let $X_{n:n}$ denote the largest order statistic from a random sample of size $n$. Then
\[\operatorname{VarJ}(X_{n:n},X;t)\ge\frac{n^2\sigma^2(t)}{\sigma^2_{n:n}(t)}
\left(\frac{F^{n-1}(t)}{\sum_{k=0}^{n-1}F^k(t)}\right)^2\,\operatorname{LowerBound}\!\left(\operatorname{VarJ}(X;t)\right).\]
\end{theorem}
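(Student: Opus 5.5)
The plan is to apply to both sides the Cacoullos--Papathanasiou-type variance lower bound, i.e.\ the lower-bound companion of Equation (12) of \cite{Goodd}, which is the bound that defines $\operatorname{LowerBound}(\operatorname{VarJ}(X;t))$. For a random variable $Z$ with density $f_Z$, mean $\mu_Z$ and variance $\sigma_Z^2$, and a differentiable $g$, it reads
\[
\operatorname{Var}(g(Z))\ge \frac{1}{\sigma_Z^2}\left(\int g'(x)\int_{-\infty}^{x}(\mu_Z-s)f_Z(s)\,ds\,dx\right)^2 .
\]
\textbf{Step 1 (the parent).} Take $Z=X_t$ with density $f^t(s)=f(s)/\overline F(t)$ on $(t,\infty)$, mean $\mu(t)=t+m(t)$ and variance $\sigma^2(t)$, and take $g(x)=-\tfrac12 f(x)/\overline F(t)$. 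Then
\[
\operatorname{LowerBound}(\operatorname{VarJ}(X;t))=\frac{1}{4\sigma^2(t)}\left(\int_t^{\infty}\frac{f'(x)}{\overline F(t)}\,K(x)\,dx\right)^2,
\qquad K(x)=\int_t^x(\mu(t)-s)f^t(s)\,ds .
\]
\textbf{Step 2 (the maximum).} By \eqref{v3.1}, $\operatorname{VarJ}(X_{n:n},X;t)$ is one quarter of the variance of $f(Z)/\overline F(t)$ with $Z\sim f^t_{n:n}$, where
\[
f^t_{n:n}(s)=\frac{nF^{n-1}(s)f(s)}{1-F^n(t)},
\]
with mean $\mu_{n:n}(t)=t+m_{n:n}(t)$ and variance $\sigma^2_{n:n}(t)$. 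The same inequality gives
\[
\operatorname{VarJ}(X_{n:n},X;t)\ge\frac{1}{4\sigma^2_{n:n}(t)}\left(\int_t^\infty\frac{f'(x)}{\overline F(t)}K_{n:n}(x)\,dx\right)^2,
\qquad K_{n:n}(x)=\int_t^x(\mu_{n:n}(t)-s)f^t_{n:n}(s)\,ds .
\]

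\textbf{Step 3 (comparing the weights).} I would show $K_{n:n}(x)\ge c\,K(x)$ for $x>t$, where
\[
c=\frac{nF^{n-1}(t)}{\sum_{k=0}^{n-1}F^k(t)} .
\]
Two ingredients enter. First, for $s\ge t$ we have $F^{n-1}(s)\ge F^{n-1}(t)$ and $1-F^n(t)=\overline F(t)\sum_{k=0}^{n-1}F^k(t)$, which together give $f^t_{n:n}(s)\ge c\,f^t(s)$. Second, the Proposition above gives $m_{n:n}(t)\ge m(t)$, hence $\mu_{n:n}(t)\ge\mu(t)$. The factor $\mu-s$ changes sign at $s=\mu$, so I will not compare the integrands pointwise. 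Instead I will use the two equivalent forms
\[
K(x)=\int_t^x(\mu(t)-s)f^t(s)\,ds=\int_x^\infty(s-\mu(t))f^t(s)\,ds\ \ge 0,
\]
and likewise for $K_{n:n}$. For $x\le\mu(t)$ the first form has a nonnegative integrand, so the two ingredients give $K_{n:n}(x)\ge cK(x)$ directly. For $x\ge\mu_{n:n}(t)$ the second form has a nonnegative integrand, and the density comparison again gives the inequality. The middle range $\mu(t)<x<\mu_{n:n}(t)$ is where I expect the \textbf{main obstacle}. There I would first try convexity: as a function of $x$, the difference $K_{n:n}-cK$ has derivative $(\mu_{n:n}(t)-x)f^t_{n:n}(x)-c(\mu(t)-x)f^t(x)$, which is nonnegative there, so the difference is increasing on that range and its value at the left endpoint $x=\mu(t)$ already controls it.

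\textbf{Step 4 (conclusion).} Given $K_{n:n}\ge cK\ge0$, passing the inequality through the squared integral against $f'(x)/\overline F(t)$ requires $f'$ to keep a constant sign on $(t,\infty)$. This is the second delicate point; I would state it as the operative condition, or else argue termwise as in the proof of the preceding theorem. Squaring then gives
\[
\operatorname{VarJ}(X_{n:n},X;t)\ge\frac{c^2}{4\sigma^2_{n:n}(t)}\left(\int_t^\infty\frac{f'(x)}{\overline F(t)}K(x)\,dx\right)^2 .
\]
Rewriting $\frac{1}{4\sigma^2_{n:n}(t)}=\frac{\sigma^2(t)}{\sigma^2_{n:n}(t)}\cdot\frac{1}{4\sigma^2(t)}$ and using the expression for $\operatorname{LowerBound}(\operatorname{VarJ}(X;t))$ from Step 1 yields
\[
\operatorname{VarJ}(X_{n:n},X;t)\ge\frac{c^2\sigma^2(t)}{\sigma^2_{n:n}(t)}\operatorname{LowerBound}(\operatorname{VarJ}(X;t)).
\]
Since $c^2=n^2\bigl(F^{n-1}(t)/\sum_{k=0}^{n-1}F^k(t)\bigr)^2$, this is exactly the stated factor, with the ratio $\sigma^2(t)/\sigma^2_{n:n}(t)$ as in the statement.
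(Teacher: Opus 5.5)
Your route is the paper's route. The paper applies (3.3a) of Cacoullos to the tilted law $f^t_{n:n}$ and then substitutes $f^t_{n:n}\ge c f^t$ and $m_{n:n}(t)\ge m(t)$ directly inside the squared double integral, without addressing either sign issue you raise. Those issues are real, but your treatment of them leaves two gaps.

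\textbf{Step 3, range $x\ge\mu_{n:n}(t)$.} This range does not follow from the density comparison. In the tail form $s-\mu_{n:n}(t)\le s-\mu(t)$, so the mean comparison now works against you. Near $s=\mu_{n:n}(t)$ the integrand of $K_{n:n}$ vanishes, while that of $cK$ equals $c(\mu_{n:n}(t)-\mu(t))f^t(s)>0$. Your arguments for $x\le\mu(t)$ and for the middle range are fine, and $K_{n:n}\ge cK$ does hold, but it needs another argument. Write $f^t_{n:n}=wf^t$ with $w(x)=nF^{n-1}(x)/\sum_{k=0}^{n-1}F^k(t)$, which is nondecreasing with $w\ge c$. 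Then $D=K_{n:n}-cK$ has $D'\ge 0$ on $(t,\mu_{n:n}(t)]$, and for $x>\mu_{n:n}(t)$
\[
D'(x)=(x-\mu_{n:n}(t))\,f^t(x)\Bigl[c\,\frac{x-\mu(t)}{x-\mu_{n:n}(t)}-w(x)\Bigr],
\]
where the bracket is nonincreasing. So $D$ rises and then falls between $D(t)=0$ and $D(\infty)=0$, hence $D\ge0$.

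\textbf{Step 4.} This is the more serious gap. It requires $f'$ to keep one sign on $(t,\infty)$, which the theorem does not assume. ``Arguing termwise'' is exactly the unjustified step in the paper. For non-monotone $f$, cancellation can make $\int f'K_{n:n}$ vanish while $\int f'K\neq 0$. In that case the Cacoullos bound for $f^t_{n:n}$ is zero, so going through it cannot give the statement in general.

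\textbf{The missing idea.} Compare variances directly rather than comparing the two bounds. Since $f^t_{n:n}\ge c f^t$ on $(t,\infty)$, for every $h$ with finite second moment,
\[
\operatorname{Var}_{f^t_{n:n}}(h(Z))=\min_{a}E_{f^t_{n:n}}\bigl(h(Z)-a\bigr)^2\ge c\,\min_{a}E_{f^t}\bigl(h(Z)-a\bigr)^2=c\,\operatorname{Var}_{f^t}(h(Z)).
\]
Taking $h=-\tfrac12 f/\overline F(t)$ gives
\[
\operatorname{VarJ}(X_{n:n},X;t)\ge c\,\operatorname{VarJ}(X;t)\ge c\,\operatorname{LowerBound}(\operatorname{VarJ}(X;t)).
\]
Taking $h(z)=z$ gives $\sigma^2_{n:n}(t)\ge c\,\sigma^2(t)$, i.e.\ $c\ge c^2\sigma^2(t)/\sigma^2_{n:n}(t)$.

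Together these give the stated inequality for every $f$. The argument does not use $m_{n:n}\ge m$, needs no sign condition on $f'$, and actually yields the stronger constant $c$.
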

\begin{proof}
By Equations \eqref{v3.1} and (3.3a) of \cite{Cacu}, we have
\[\operatorname{VarJ}(X_{n:n},X;t)\geq
\frac{1}{4\sigma^2_{n:n}(t)}\left(\int_{t}^{+\infty}\int_{t}^{x}\bigl(m_{n: n}(t)-s\bigr)\,f^{t}_{n:n}(s)\,
\left(\frac{f^{'}(x)}{\overline F(t)}\right)\,ds\,dx\right)^2,\]
where
\[f^{t}_{n:n}(s)=\frac{nF^{n-1}(s)f(s)}{1-F^n(t)},\quad s>t.\]
Since
$1-F^n(t)=\overline F(t)\sum_{k=0}^{n-1}F^{k}(t)$
it follows that
\[f^{t}_{n:n}(s)\ge \frac{nF^{n-1}(t)}{\sum_{k=0}^{n-1}F^{k}(t)}\,\frac{f(s)}{\overline F(t)}= \frac{nF^{n-1}(t)}{\sum_{k=0}^{n-1}F^{k}(t)}\,f^{t}(s).\]
On the other hand, $m_{n:n}(t)\geq m(t)$, therefore,
\begin{align}
\operatorname{VarJ}(X_{n:n},X;t)\nonumber
&\ge \frac{1}{4\sigma^2_{n:n}(t)}{\left(\int_{t}^{+\infty}
\int_{t}^{x}\bigl(m(t)-s\bigr)
\,\frac{nF^{n-1}(t)}{\sum_{k=0}^{n-1}F^{k}(t)}f^{t}(s)\,
\left(\frac{f^{'}(x)}{\overline F(t)}\right)\,
ds\,dx\right)}^2\\&=
\frac{\sigma^2(t)}{\sigma^2_{n:n}(t)}\left(\frac{nF^{n-1}(t)}{\sum_{k=0}^{n-1}F^{k}(t)}\right)^2\,\operatorname{LowerBound}\!\left(
\operatorname{VarJ}(X;t)
\right),
\end{align}
where $\sigma^2_{n: n}(t)=\frac{n}{1-F^n(t)}\int_{0}^{+\infty}(x-t)^2F^{n-1}(x)f(x)dx-m_{n:n}^2(t)$.
\end{proof}
%
%
\begin{theorem}\label{thm:BoundsJ}
Let $X$ be a nonnegative absolutely continuous random variable with
continuously differentiable density function $f$. Let
\[
m_{i:n}(t)
=
E\!\left(X_{i:n}-t\,\middle|\,X_{i:n}>t\right),
\]
denote the mean residual life (MRL) function of the $i$th order statistic.
Furthermore, let
\[
\eta(t)
=
-\frac{f'(t)}{f(t)},
\]
denote the \emph{eta function} associated with the density function $f$. Then the following assertions hold.
\begin{enumerate}
\item[(i)]
If $f$ is 
 concave on its support, then
\[J(X_{i:n},X;t)\ge\frac{r(t)}{2}\left(\eta(t)m_{i:n}(t)-1\right).\]

\item[(ii)]
If $f$ is 
convex on its support, then
\[J(X_{i:n}, X;t)\le\frac{r(t)}{2}\left(\eta(t)m_{i:n}(t)-1\right),\]
\end{enumerate}
where $r(t)=\frac{f(t)}{\overline F(t)}$ is hazard function of $X$.
\end{theorem}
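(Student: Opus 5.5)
The plan is to use the tangent-line characterization of concavity and convexity of $f$ at the truncation point $t$, and to integrate it against the residual density of $X_{i:n}$. I take the dynamic residual extropy-based inaccuracy to be the first-moment analogue of \eqref{v3.1}:
\[
J(X_{i:n},X;t)=-\frac{1}{2}\int_{t}^{+\infty}\frac{f(x)}{\overline F(t)}\,\frac{f_{i:n}(x)}{\overline F_{i:n}(t)}\,dx
=-\frac{1}{2\overline F(t)}\,E\bigl[f(X_{i:n})\,\big|\,X_{i:n}>t\bigr].
\]
The whole proof therefore reduces to bounding the conditional expectation $E[f(X_{i:n})\mid X_{i:n}>t]$ by a linear function of $m_{i:n}(t)$.

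For (i), since $f$ is concave and continuously differentiable on its support, its graph lies below every tangent line. So for all $x\ge t$ in the support,
\[
f(x)\le f(t)+f'(t)(x-t).
\]
Multiply by the residual density $f_{i:n}(x)/\overline F_{i:n}(t)$, which is nonnegative on $(t,\infty)$, and integrate. Using $\int_t^\infty f_{i:n}(x)\,dx/\overline F_{i:n}(t)=1$ and $\int_t^\infty (x-t)f_{i:n}(x)\,dx/\overline F_{i:n}(t)=m_{i:n}(t)$, this gives
\[
E\bigl[f(X_{i:n})\mid X_{i:n}>t\bigr]\le f(t)+f'(t)\,m_{i:n}(t).
\]

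Next I multiply by $-1/(2\overline F(t))<0$, which reverses the inequality. Then I rewrite $f'(t)=-\eta(t)f(t)$ and $f(t)/\overline F(t)=r(t)$:
\[
J(X_{i:n},X;t)\ge -\frac{1}{2\overline F(t)}\bigl(f(t)-\eta(t)f(t)m_{i:n}(t)\bigr)=\frac{r(t)}{2}\bigl(\eta(t)m_{i:n}(t)-1\bigr).
\]
Part (ii) is identical with every inequality reversed, since for convex $f$ the graph lies above its tangent line at $t$.

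The only delicate point is justifying the integration step. The tangent inequality is needed on the part of the support beyond $t$, and $m_{i:n}(t)$ must be finite so that the right-hand side is meaningful. If the support is bounded on the right, as in the beta-type examples, the integration is simply over $(t,\text{right endpoint})$. If $m_{i:n}(t)=\infty$, the bound is vacuous or trivially true, and I would state this explicitly. No earlier result of the paper is needed beyond the definitions of the residual density and of the MRL function.
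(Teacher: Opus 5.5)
Your proposal is correct and follows essentially the same route as the paper, which obtains the tangent-line bound $f(t)-f(x)\ge -f'(t)(x-t)$ via the Mean Value Theorem and the monotonicity of $f'$ (equivalent to your tangent-line characterization of concavity). Like you, it then takes the conditional expectation given $X_{i:n}>t$, divides by $\overline F(t)$, substitutes $-f'(t)=\eta(t)r(t)\overline F(t)$, and handles part (ii) by reversing the inequalities; your remarks on the finiteness of $m_{i:n}(t)$ and on bounded support are sensible additions that the paper leaves implicit.
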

\begin{proof}
Recall from \eqref{eqJM} that
\[J(X_{i:n},X;t)=
-\frac{1}{2\overline F(t)}
E\!\left[f(X_{i:n})
\,\middle|\,X_{i:n}>t\right].\]

Define
\[A(t)=\frac{1}{\overline F(t)}E\!\left[f(X_{i:n})\,\middle|\,X_{i:n}>t\right].\]

therefore
\[
r(t)-A(t)
=
\frac{
f(t)
-
E\!\left[f(X_{i:n})\,\middle|\,X_{i:n}>t\right]
}
{\overline F(t)}.
\]

Now let $x>t$. By the Mean Value Theorem,
\[
f(t)-f(x)
=
-f'(\xi)(x-t),
\qquad
\xi\in(t,x).
\]

Assume that $f$ is concave. Since \(f''(x)\le0,\)
the derivative $f'$ is decreasing. Hence,
\(
-f'(\xi)\ge -f'(t),
\)
which implies
\[
f(t)-f(x)
\ge
(-f'(t))(x-t).
\]

Taking conditional expectation with respect to the event
$\{X_{i:n}>t\}$,
\[
f(t)
-
E\!\left[f(X_{i:n})\,\middle|\,X_{i:n}>t\right]
\ge
(-f'(t))
m_{i:n}(t).
\]

Dividing both sides by $\overline F(t)$ yields
\[
r(t)-A(t)
\ge
\frac{-f'(t)}
{\overline F(t)}
m_{i:n}(t).
\]

Since
\(-f'(t)=\eta(t)f(t)=\eta(t)r(t)\overline F(t),\)
we obtain
\[r(t)-A(t)\ge\eta(t)r(t)m_{i:n}(t).\]

Finally, using the identity $A(t)=-2J(X_{i:n},X;t),$
we have
\[
r(t)+2J(X_{i:n},X;t)
\ge
\eta(t)r(t)m_{i:n}(t),
\]
or equivalently,
\[
J(X_{i:n},X;t)
\ge
\frac{r(t)}{2}
\left(
\eta(t)m_{i:n}(t)-1
\right).
\]

This proves part (i). Part (ii) follows similarly by reversing the above
inequalities when $f$ is convex.
\end{proof}
%
\begin{proposition}\label{prop:VarJ}
Let $X$ be a nonnegative absolutely continuous random variable with
continuously differentiable density function $f$.
Then, the derivative of the dynamic residual inaccuracy variance is given by
\begin{align}\label{e3.6}
\frac{d}{dt}\operatorname{VarJ}(X_{i:n},X;t)=\operatorname{VarJ}(X_{i:n},X;t)\Bigl(r_{i:n}(t)+2r(t)\Bigr)
-\frac14\,r_{i:n}(t)\Bigl(r(t)+2J(X_{i:n},X;t)\Bigr)^2,
\end{align}
where $r(t)$ and $r_{i:n}(t)$ denote the hazard rate functions of $X$ and $X_{i:n}$, respectively.
\end{proposition}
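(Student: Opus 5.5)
The plan is a direct computation: write $\operatorname{VarJ}(X_{i:n},X;t)$ as a combination of two simple quotients, differentiate each one with the quotient rule and the Leibniz rule, and then regroup the result in terms of $\operatorname{VarJ}$ and $J$.

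\textbf{Notation.} Write $S(t)=\overline F(t)$ and $S_i(t)=\overline F_{i:n}(t)$, and set
\[
A(t)=\int_t^{+\infty} f^2(x)f_{i:n}(x)\,dx,\qquad B(t)=\int_t^{+\infty} f(x)f_{i:n}(x)\,dx .
\]
By \eqref{v3.1},
\[
\operatorname{VarJ}(X_{i:n},X;t)=\tfrac14\left(T(t)-K^2(t)\right),\qquad T=\frac{A}{S^2S_i},\qquad K=\frac{B}{SS_i}.
\]
The residual analogue of \eqref{eqJM} gives $J(X_{i:n},X;t)=-\tfrac12 K(t)$, i.e.\ $K=-2J(X_{i:n},X;t)$.

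\textbf{Derivatives of the building blocks.} These are
\[
S'=-rS,\qquad S_i'=-r_iS_i,\qquad A'(t)=-f^2(t)f_{i:n}(t),\qquad B'(t)=-f(t)f_{i:n}(t),
\]
where $r_i=r_{i:n}$. Differentiability of $A$ and $B$ follows from continuity of $f$ and $f_{i:n}$.

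\textbf{Differentiating $T$ and $K$.} The quotient rule gives
\[
T'=\frac{A'}{S^2S_i}+T\,(2r+r_i).
\]
Since $\dfrac{f^2(t)f_{i:n}(t)}{S^2S_i}=r^2r_i$, this becomes
\[
T'=-r^2r_i+T(2r+r_i).
\]
In the same way,
\[
K'=-rr_i+K(r+r_i),\qquad\text{so}\qquad (K^2)'=2K\left(-rr_i+K(r+r_i)\right).
\]

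\textbf{Assembling the derivative.} Combining the two pieces,
\[
\frac{d}{dt}\operatorname{VarJ}(X_{i:n},X;t)=\tfrac14\left[-r^2r_i+T(2r+r_i)+2Krr_i-2K^2(r+r_i)\right].
\]

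\textbf{Matching the right-hand side of \eqref{e3.6}.} Substitute $4\operatorname{VarJ}=T-K^2$ and $r+2J=r-K$ into the claimed expression. Its right-hand side becomes
\[
\tfrac14\left[(T-K^2)(r_i+2r)-r_i(r^2-2rK+K^2)\right].
\]
Expanding this gives exactly the bracket obtained in the previous step. The $K^2$ terms are the ones to check: $-K^2r_i-2rK^2-r_iK^2=-2K^2(r+r_i)$.

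The only delicate point is bookkeeping. One has to recognise that $f(t)f_{i:n}(t)/(SS_i)=r\,r_i$, and keep the sign convention $K=-2J$ straight. Once these are fixed, the identity follows from expanding both sides.
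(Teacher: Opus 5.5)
Your proposal is correct and follows essentially the same route as the paper. The paper also directly differentiates $\operatorname{VarJ}(X_{i:n},X;t)=\frac{1}{4\overline F^{2}(t)\overline F_{i:n}(t)}\int_t^{+\infty}f_{i:n}(x)f^2(x)\,dx-J^2(X_{i:n},X;t)$, using $\frac{d}{dt}J(X_{i:n},X;t)=\frac12 r_{i:n}(t)r(t)+J(X_{i:n},X;t)\bigl(r_{i:n}(t)+r(t)\bigr)$, which is exactly your formula for $K'$ under $K=-2J$; your version simply carries out explicitly the ``straightforward algebraic manipulations'' that the paper leaves to the reader.
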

\begin{proof}
From the Equation \eqref{v3.1}, we have
\[\operatorname{VarJ}(X_{i:n},X;t)=\frac{1}{4(\overline F(t))^2\overline F_{i:n}(t)}\int_{t}^{+\infty}f_{i:n}(x)f^2(x)dx-\left(J(X_{i:n},X;t)\right)^2,\]
Differentiating both sides with respect to $t$ and using
\begin{align}
\frac{d}{dt}J(X_{i:n},X;t)
=\frac12\,r_{i:n}(t)r(t)+J(X_{i:n},X;t)\Bigl(r_{i:n}(t)+r(t)
\Bigr),
\end{align}
after straightforward algebraic manipulations, the desired result follows.
\end{proof}
\begin{corollary}
Suppose that the assumptions of Theorem~\ref{thm:BoundsJ}(i) hold.
Furthermore, assume that
\[
r(t)+2J(X_{i:n},X;t)\ge0.
\]
If
\[
\operatorname{VarJ}(X_{i:n},X;t)
\le
\frac{
r_{i:n}(t)\eta^{2}(t)r^{2}(t)m_{i:n}^{2}(t)
}
{4\left(r_{i:n}(t)+2r(t)\right)},
\]
then
\[
\frac{d}{dt}\operatorname{VarJ}(X_{i:n},X;t)\le0.
\]
Consequently,
\[
\operatorname{VarJ}(X_{i:n},X;t)
\]
is a decreasing function of \(t\).
\end{corollary}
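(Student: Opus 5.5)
The plan is to start from the derivative formula \eqref{e3.6} of Proposition~\ref{prop:VarJ} and show that its right-hand side is nonpositive under the stated hypotheses. Write
\[
\frac{d}{dt}\operatorname{VarJ}(X_{i:n},X;t)=\operatorname{VarJ}(X_{i:n},X;t)\bigl(r_{i:n}(t)+2r(t)\bigr)-\frac14\,r_{i:n}(t)\bigl(r(t)+2J(X_{i:n},X;t)\bigr)^2 .
\]
It then suffices to prove
\[
\operatorname{VarJ}(X_{i:n},X;t)\bigl(r_{i:n}(t)+2r(t)\bigr)\le \frac14\, r_{i:n}(t)\bigl(r(t)+2J(X_{i:n},X;t)\bigr)^2 .
\]

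First I would bound the squared term from below. Theorem~\ref{thm:BoundsJ}(i) applies because $f$ is concave. It gives $r(t)+2J(X_{i:n},X;t)\ge \eta(t)r(t)m_{i:n}(t)$. Under concavity $f'$ is decreasing, so on the support $f'\le 0$ is needed to make the right-hand side nonnegative, and $r,m_{i:n}\ge 0$. More directly, the proof of Theorem~\ref{thm:BoundsJ} yields $r(t)-A(t)\ge \eta(t) r(t) m_{i:n}(t)$, where $\eta(t)r(t)m_{i:n}(t)=-f'(t)m_{i:n}(t)/\overline F(t)$. If this lower bound is negative, squaring is not monotone. I would therefore note explicitly that the assumed inequality only matters when the lower bound is nonnegative, and otherwise restrict to $\eta(t)\ge0$. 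This is the main subtle step. Once $0\le \eta(t)r(t)m_{i:n}(t)\le r(t)+2J(X_{i:n},X;t)$ holds, using the hypothesis $r(t)+2J\ge0$, squaring gives
\[
\bigl(r(t)+2J(X_{i:n},X;t)\bigr)^2\ge \eta^2(t)r^2(t)m_{i:n}^2(t).
\]

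Next I would multiply the assumed upper bound on $\operatorname{VarJ}(X_{i:n},X;t)$ by the positive quantity $r_{i:n}(t)+2r(t)$, since hazard rates are nonnegative and not both zero where defined. This gives
\[
\operatorname{VarJ}(X_{i:n},X;t)\bigl(r_{i:n}(t)+2r(t)\bigr)\le \tfrac14 r_{i:n}(t)\eta^2(t)r^2(t)m_{i:n}^2(t)\le \tfrac14 r_{i:n}(t)\bigl(r(t)+2J(X_{i:n},X;t)\bigr)^2 .
\]
Substituting into \eqref{e3.6} gives a nonpositive derivative.

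Finally, if the hypothesis holds for all $t$ in the support, the mean value theorem shows that $\operatorname{VarJ}(X_{i:n},X;t)$ is nonincreasing in $t$. The only real obstacle is the sign issue in passing from the linear bound of Theorem~\ref{thm:BoundsJ}(i) to the squared bound. I would handle it by requiring $\eta(t)\ge 0$, which is compatible with a concave density on $[0,\infty)$ since it must be nonincreasing there.
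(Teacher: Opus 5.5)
Your route is the one the paper intends. The paper gives no separate proof: the corollary is meant to follow by inserting Theorem~\ref{thm:BoundsJ}(i) into \eqref{e3.6}. The sign problem you flag is real, and it is not a technicality your write-up can absorb. From $r(t)+2J(X_{i:n},X;t)\ge \eta(t)r(t)m_{i:n}(t)$ and $r(t)+2J(X_{i:n},X;t)\ge0$ you cannot get $(r+2J)^2\ge\eta^2r^2m_{i:n}^2$ when $\eta(t)<0$. In fact the corollary as written is false.

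Take $X\sim\mathrm{Beta}(2,2)$ with $f(x)=6x(1-x)$, which is smooth and concave, and let $i=n=1$. Let $t_0=(\sqrt5-1)/4\approx0.309$. This is exactly the point where $f(t_0)=E[f(X)\mid X>t_0]$, so $r(t_0)+2J(X_{1:1},X;t_0)=0$ and the hypothesis $r+2J\ge0$ holds. At $t_0$ one has $\eta(t_0)\approx-1.79$, $r(t_0)\approx1.66$ and $m(t_0)\approx0.28$. The right-hand side of the assumed bound is therefore $\eta^2r^2m^2/12\approx0.057$, while $\operatorname{VarJ}(X_{1:1},X;t_0)\approx0.034$. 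So every hypothesis holds. Yet \eqref{e3.6} gives $\frac{d}{dt}\operatorname{VarJ}=3r(t_0)\operatorname{VarJ}>0$. By continuity this persists on some interval $[t_0,t_0+\epsilon]$, where the hypotheses still hold and $\operatorname{VarJ}$ is strictly increasing.

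Your hedge that the assumed inequality ``only matters when the lower bound is nonnegative'' does not rescue the statement. Your claim that $\eta(t)\ge0$ comes for free is also wrong. No density on $[0,\infty)$ can be concave: a nonnegative concave function on a half-line is nondecreasing, hence not integrable unless it vanishes. So Theorem~\ref{thm:BoundsJ}(i) only concerns bounded supports, and there concave densities can increase, as in the example on $(0,1/2)$.

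Thus $\eta(t)\ge0$, i.e.\ $f'(t)\le0$, must be added as an explicit hypothesis. With it, Theorem~\ref{thm:BoundsJ}(i) gives $r+2J\ge\eta r m_{i:n}\ge0$, which makes the assumption $r+2J\ge0$ redundant. Your chain of inequalities is then complete, provided the hypotheses are imposed on an interval of $t$ to obtain monotonicity. What you have proved is this corrected statement, not the one as written.
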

Motivated by the characterization of  dynamic residual inaccuracy, we next investigate whether its associated variance can also characterize the underlying lifetime distribution.  
%
\begin{theorem}
Let $X$ be a nonnegative absolutely continuous random variable with
cumulative distribution function $F$. Suppose that
\begin{equation}\label{eq3.8}
J(X_{1:n},X;t)=C_2r(t),
\end{equation}
and
\begin{equation}\label{eq3.9}
V(t)=\operatorname{VarJ}(X_{1:n},X;t)=C_1r^2(t),
\end{equation}
where $C_2<0$ and $C_1>0$ are constants. Then $X$ belongs to one of the
following classes:
\begin{enumerate}
\item $X$ has an exponential distribution if and only if
\begin{equation}
C_1=\frac{n(1+2C_2)^2}{4(n+2)}.
\end{equation}
\item $X$ has a finite-range distribution if and only if
\begin{align}
    C_1>\frac{n(1+2C_2)^2}{4(n+2)}.
\end{align}

\item $X$ has a Lomax distribution if and only if
\begin{align}
    C_1<\frac{n(1+2C_2)^2}{4(n+2)}.
\end{align}
\end{enumerate}
\end{theorem}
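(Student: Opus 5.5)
The plan is to turn the two hypotheses \eqref{eq3.8} and \eqref{eq3.9} into differential equations for the hazard rate $r(t)$ and then read off the sign of one constant. The basic observation is that for the smallest order statistic
\[
\overline F_{1:n}(t)=\overline F^{\,n}(t),\qquad r_{1:n}(t)=n\,r(t).
\]
So every quantity in Proposition~\ref{prop:VarJ}, and in the derivative formula for $J(X_{i:n},X;t)$ used in its proof, can be written in terms of $r$ alone.

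First I use \eqref{eq3.8}. Differentiating $J(X_{1:n},X;t)=C_2 r(t)$ and inserting $r_{1:n}=nr$ into
\[
\tfrac{d}{dt}J=\tfrac12 r_{1:n}r+J\,(r_{1:n}+r)
\]
should give
\[
C_2 r'(t)=r^2(t)\Bigl(\tfrac n2+C_2(n+1)\Bigr).
\]
Hence $r'/r^2$ is constant. Equivalently, $1/r(t)=\alpha+bt$ is affine in $t$ for some constant $b$.

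Next I substitute $V=C_1r^2$, $J=C_2r$ and $r_{1:n}=nr$ into \eqref{e3.6}. The left side is $2C_1 r r'$. The right side is
\[
C_1(n+2)r^3-\tfrac n4(1+2C_2)^2r^3 .
\]
Dividing by $r^3$ and using $r'/r^2=-(1/r)'=-b$, I expect to obtain
\[
-2C_1 b=C_1(n+2)-\frac{n(1+2C_2)^2}{4},\qquad\text{that is,}\qquad b=\frac{1}{2C_1}\left[\frac{n(1+2C_2)^2}{4}-C_1(n+2)\right].
\]
Since $C_1>0$, the sign of $b$ is the sign of $\frac{n(1+2C_2)^2}{4(n+2)}-C_1$.

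Finally I identify the three cases from the linear form of $1/r$. I integrate $r(t)=1/(\alpha+bt)$ to get $\overline F(t)$, with $\alpha=1/r(0)>0$.
\begin{enumerate}
\item If $b=0$, the hazard is constant and $X$ is exponential.
\item If $b>0$, then $\overline F(t)=(1+bt/\alpha)^{-1/b}$, which is the Lomax law.
\item If $b<0$, then $\overline F(t)=(1-|b|t/\alpha)^{1/|b|}$ on $[0,\alpha/|b|)$, which is a finite-range (rescaled beta) law.
\end{enumerate}
This gives the forward implications. The converses follow because the three families are mutually exclusive and the three conditions on $C_1$ exhaust all possibilities. Alternatively, one can check directly that each family satisfies \eqref{eq3.8}--\eqref{eq3.9} with the corresponding relation; for the exponential case this agrees with the earlier example, where $C_2=-\frac{n}{2(n+1)}$ and $C_1=\frac{n}{4(n+1)^2(n+2)}$.

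The main obstacle is bookkeeping rather than ideas. Specializing \eqref{e3.6} correctly to $i=1$ with $r_{1:n}=nr$ requires care with signs. One must also justify dividing by $r$ and $r^3$, which needs $r>0$ on the support. Finally, the integration constant must be fixed by $\overline F(0)=1$ so that the finite-range case has the right support endpoint.
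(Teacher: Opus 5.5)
Your proposal is correct and is essentially the paper's proof. Substituting $J=C_2r$, $V=C_1r^2$ and $r_{1:n}=nr$ into \eqref{e3.6} makes $r'/r^2$ constant, integrating the hazard in the three sign cases gives the exponential, Lomax and finite-range laws, and your slope $b$ is $-A$ in the paper's notation, so the case assignments agree. The differences are minor: differentiating \eqref{eq3.8} first is not needed (though it shows \eqref{eq3.8} alone forces $1/r$ to be affine), the positivity of $r$ you flag follows at once from \eqref{eq3.8} since $J<0$ and $C_2<0$, and your mutual-exclusivity argument validly replaces the paper's direct verification of the converse directions.
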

\begin{proof}
We first establish the sufficiency part. 
By using equation \eqref{e3.6} with $n=1$, we obtain
\begin{align}\label{eq*}
V'(t)=V(t)(r_{1:n}(t)+2r(t))-\frac{1}{4}r_{1:n}(t)(r(t)+2J(X_{1: n}, X; t))^2
\end{align}
and also using 
\begin{align*}
r_{1:n}(t)=nr(t),
\end{align*}
and equations \eqref{eq3.8} and \eqref{eq3.9}, equation  \eqref{eq*} reduce to

\begin{align*}
2C_1r'(t)r(t)=C_1(n+2)r^3(t)-\frac{n}{4}(1+2C_2)^2r^3(t).
\end{align*}
Hence,
\begin{equation*}
\frac{2C_1r'(t)}{r^2(t)}=C_1(n+2)-\frac{n}{4}(1+2C_2)^2,
\end{equation*}
Now, if  $\frac{r'(t)}{r^2(t)}=A, \, \, t\geq 0,$ where 
$A=\frac{C_1(n+2)-\frac{n}{4}(1+2C_2)^2}{2C_1}$, then 
integrating the above differential equation gives $r(t)=\frac{1}{q-At}$, where $q=\frac{1}{r(0)}.$

If $C_1(n+2)=\frac{n}{4}(1+2C_2)^2,$ then $A=0$, and consequently $r'(t)=0.$

Thus, $r(t)=\lambda$ for some $\lambda>0$, which implies that the
hazard rate is constant. Therefore, $X$ has an exponential distribution.
Next, suppose that
\begin{equation}
C_1(n+2)
>
\frac{n}{4}(1+2C_2)^2.
\end{equation}
Then $A>0$, and
\begin{equation}
r(t)=\frac{1}{q-At},
\qquad
0\leq t<\frac{q}{A}.
\end{equation}
Since
$\frac{d}{dt}\left[-\log\overline{F}(t)\right]=\frac{1}{q-At},$
we obtain
\begin{align}
\overline{F}(t)
=\exp\left\{-\int_0^t\frac{dx}{q-Ax}\right\} =\left(1-\frac{A}{q}t\right)^{1/A},
\qquad
0\leq t<\frac{q}{A}.
\end{align}
Hence, the survival function vanishes at the finite endpoint $q/A$, and
$X$ has a finite-range distribution.
Finally, if
\begin{equation}
C_1(n+2)<\frac{n}{4}(1+2C_2)^2,
\end{equation}
then $A<0$. Writing $A=-p>0$, gives $r(t)=\frac{1}{q+p t}.$
Consequently,
\begin{align}
\overline{F}(t)=\exp\left\{-\int_0^t\frac{dx}{q+p x}\right\}=\left(1+\frac{p}{q}t\right)^{-1/p},
\end{align}
With considering $\frac{q}{p}=\lambda$ and $\frac{1}{p}=\alpha$, we have $f(t)=\frac{\alpha}{\lambda}(1+\frac{t}{\lambda})^{-(1+\alpha)}$, for $t\ge 0$,
which is the probability distribution function of a Lomax distribution. 

The necessity of Parts 1--3 can be established through straightforward calculations. 
For example, if $X$ has a Lomax distribution, then simplify we can compute
\begin{align*}
r(t)=\frac{\alpha}{\lambda}\left(1+\frac{t}{\lambda}\right)^{-1},
\end{align*}
\begin{align*}
J(X_{1: n}, X; t)&=-\frac{n\alpha}{2(\alpha(n+1)+1)}r(t),
\end{align*}
and
\begin{align*}
\operatorname{VarJ}(X_{1: n}, X; t)=\left[\frac{n\alpha}{4(\alpha(n+2)+2)}-\frac{n^2\alpha^2}{4(\alpha(n+1)+1)^2}\right]r^2(t),
\end{align*}
 and 
hence, the proof is complete.
\end{proof}

%
%
We next investigate whether the hazard rate ordering is preserved by the dynamic residual measure of inaccuracy based on extropy and varextropy for the Lomax distribution.
\begin{theorem}
Let $X\sim \mathrm{Lomax}(\alpha_1,\lambda),$ and $Y\sim \mathrm{Lomax}(\alpha_2,\lambda),$
where $\lambda>0$ is fixed. If $\alpha_1\ge \alpha_2,$ then $X\le_{hr}Y.$
Furthermore, $J(X_{1:n},X;t)\le J(Y_{1:n},Y;t), t\ge0.$
\end{theorem}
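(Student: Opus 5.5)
The plan has two parts: first the hazard rate ordering, then the comparison of dynamic inaccuracies through the closed form for the Lomax family derived in the necessity part of the preceding characterization theorem.

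\textbf{Hazard rate ordering.} For the Lomax law with density $f(t)=\frac{\alpha}{\lambda}(1+t/\lambda)^{-(1+\alpha)}$, the survival function is $\overline F(t)=(1+t/\lambda)^{-\alpha}$. Hence the hazard rate is
\[
r(t)=\frac{\alpha}{\lambda}\Bigl(1+\frac{t}{\lambda}\Bigr)^{-1}=\frac{\alpha}{\lambda+t}.
\]
Therefore $r_X(t)=\alpha_1/(\lambda+t)\ge\alpha_2/(\lambda+t)=r_Y(t)$ for all $t\ge0$ whenever $\alpha_1\ge\alpha_2$. Equivalently, $\overline F_Y(t)/\overline F_X(t)=(1+t/\lambda)^{\alpha_1-\alpha_2}$ is nondecreasing in $t$. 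Either form gives $X\le_{hr}Y$.

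\textbf{Explicit inaccuracy.} From the necessity part of the preceding theorem,
\[
J(X_{1:n},X;t)=-\frac{n\alpha}{2(\alpha(n+1)+1)}\,r(t)=-\frac{n\alpha^2}{2(\alpha(n+1)+1)(\lambda+t)}.
\]
I would re-derive this quickly to be safe, for instance by computing $\int_t^\infty f(x)f_{1:n}(x)\,dx$ with $f_{1:n}=n\overline F^{\,n-1}f$ and normalizing by $\overline F(t)\overline F_{1:n}(t)=\overline F^{\,n+1}(t)$. The integrand is a pure power of $(1+x/\lambda)$, so this is elementary.

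\textbf{Monotonicity in $\alpha$.} The claim then reduces to showing that
\[
\psi(\alpha)=\frac{n\alpha^2}{\alpha(n+1)+1}
\]
is nondecreasing in $\alpha>0$. Once that holds, $\alpha_1\ge\alpha_2$ gives $-\psi(\alpha_1)/(2(\lambda+t))\le-\psi(\alpha_2)/(2(\lambda+t))$, which is exactly $J(X_{1:n},X;t)\le J(Y_{1:n},Y;t)$ for every $t\ge0$. Differentiating,
\[
\psi'(\alpha)=\frac{n\alpha\bigl(\alpha(n+1)+2\bigr)}{(\alpha(n+1)+1)^2}>0.
\]

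The main obstacle is only checking the constant in the closed form of $J$. The sign and the monotonicity argument are immediate once that form is confirmed.
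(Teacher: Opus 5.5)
Your proposal is correct and follows essentially the same route as the paper's proof. You obtain the ordering from the hazard rates $\alpha_k/(\lambda+t)$, then use the closed form $J(X_{1:n},X;t)=-n\alpha^2/\bigl(2(\alpha(n+1)+1)(\lambda+t)\bigr)$, which does check out under the normalization by $\overline F^{\,n+1}(t)$. The conclusion follows from the same derivative computation in $\alpha$.
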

\begin{proof}
Since for $t\ge0$, $r_X(t)=\frac{\alpha_1}{\lambda+t}$ and $r_Y(t)=\frac{\alpha_2}{\lambda+t},$
it follows immediately that $r_X(t)\ge r_Y(t)$ and hence $X\le_{hr}Y.$

On the other hand, 
$J(X_{1:n},X;t)=-\frac{n\alpha^2}{2(\alpha(n+1)+1)(\lambda+t)}=:H(\alpha).$

Differentiating $H(\alpha)$ with respect to $\alpha$ gives
\[H'(\alpha)=-\frac{n\alpha\bigl(\alpha(n+1)+2\bigr)}{2(\lambda+t)\bigl(\alpha(n+1)+1\bigr)^2}<0,\]
for every $\alpha>0$. Therefore,
\[\alpha_1\ge\alpha_2\quad\Longrightarrow\quad H(\alpha_1)\le H(\alpha_2),\]
which proves that
\[J(X_{1:n},X;t)\le J(Y_{1:n},Y;t).\]
\end{proof}
%
%
\begin{theorem}
Let $X\sim \mathrm{Lomax}(\alpha,\lambda_1)$ and $Y\sim \mathrm{Lomax}(\alpha,\lambda_2)$,
where $\alpha>0$ is fixed. If $\lambda_1\le \lambda_2,$
then $X\le_{hr}Y.$
Moreover,
\(J(X_{1:n},X;t)\le J(Y_{1:n},Y;t),\)
and
\[\operatorname{VarJ}(X_{1:n},X;t)
\ge
\operatorname{VarJ}(Y_{1:n},Y;t),
\qquad t\ge0.
\]
\end{theorem}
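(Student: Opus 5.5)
The plan is to reduce everything to the closed-form Lomax expressions already obtained in the necessity part of the characterization theorem above. Then $\lambda$ enters only through the factor $(\lambda+t)^{-1}$, and each claim becomes a one-line monotonicity statement in $\lambda$ for fixed $\alpha$, $n$ and $t$.

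\textbf{Step 1 (hazard rate order).} For $\mathrm{Lomax}(\alpha,\lambda)$ we have $\overline F(t)=(1+t/\lambda)^{-\alpha}$, so $r(t)=\alpha/(\lambda+t)$. Hence $r_X(t)=\alpha/(\lambda_1+t)\ge \alpha/(\lambda_2+t)=r_Y(t)$ for every $t\ge0$ whenever $\lambda_1\le\lambda_2$, which is exactly $X\le_{hr}Y$.

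\textbf{Step 2 (extropy-based inaccuracy).} By the proof of the characterization theorem,
\[
J(X_{1:n},X;t)=-\frac{n\alpha}{2(\alpha(n+1)+1)}\,r(t)=-\frac{n\alpha^2}{2(\alpha(n+1)+1)(\lambda+t)}.
\]
The prefactor $K(\alpha,n)=n\alpha^2/\bigl(2(\alpha(n+1)+1)\bigr)$ is positive and does not depend on $\lambda$. The map $\lambda\mapsto -K/(\lambda+t)$ is therefore increasing, and $\lambda_1\le\lambda_2$ gives $J(X_{1:n},X;t)\le J(Y_{1:n},Y;t)$.

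If the quoted formula needs re-verification, I would check it directly from \eqref{v3.1}. The residual density of $X_{1:n}$ is $n r(t)(1+(x-t)/(\lambda+t))^{-(n\alpha+1)}$, and $f(x)/\overline F(t)$ is likewise a power of $(\lambda+x)/(\lambda+t)$. Substituting $y=(\lambda+x)/(\lambda+t)$ then gives elementary integrals $\int_1^\infty y^{-p}\,dy=1/(p-1)$.

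\textbf{Step 3 (varextropy-based inaccuracy).} Again from the characterization theorem, $\operatorname{VarJ}(X_{1:n},X;t)=C(\alpha,n)\,r^2(t)$, where
\[
C(\alpha,n)=\frac{n\alpha}{4(\alpha(n+2)+2)}-\frac{n^2\alpha^2}{4(\alpha(n+1)+1)^2}.
\]
This gives $\operatorname{VarJ}(X_{1:n},X;t)=C(\alpha,n)\,\alpha^2/(\lambda+t)^2$. The key point is that $C(\alpha,n)\ge0$, and I would justify this without algebra. The quantity is a variance of $-\tfrac12 f(X)/\overline F(t)$ under the residual law of $X_{1:n}$. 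That variance is finite because $\alpha(n+2)+2>0$, so it is nonnegative. Indeed, at $t=0$ we have $C(\alpha,n)=\operatorname{VarJ}\cdot\lambda^2/\alpha^2\ge0$. With $C\ge0$ and $\alpha$ fixed, $\lambda\mapsto C\alpha^2/(\lambda+t)^2$ is nonincreasing, so $\lambda_1\le\lambda_2$ yields $\operatorname{VarJ}(X_{1:n},X;t)\ge\operatorname{VarJ}(Y_{1:n},Y;t)$.

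The only delicate point is the sign of $C(\alpha,n)$, which is needed for the direction of the last inequality. I would settle it by the variance argument rather than by expanding the rational expression. As a cross-check, the constants in Steps 2 and 3 can be recomputed with the substitution from Step 2, which confirms the $(\lambda+t)^{-1}$ and $(\lambda+t)^{-2}$ scalings.
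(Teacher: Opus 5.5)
Your proposal is correct and follows the paper's own proof. Both arguments use the closed forms $J(X_{1:n},X;t)=-\frac{n\alpha}{2(\alpha(n+1)+1)}\,r(t)$ and $\operatorname{VarJ}(X_{1:n},X;t)=c_1(\alpha)\,r^2(t)$, combined with $r_X(t)\ge r_Y(t)$. The paper only asserts $c_1(\alpha)>0$, so your variance-based justification of its sign is a welcome addition; it also follows algebraically from $\bigl((n+1)\alpha+1\bigr)^2-n\alpha\bigl((n+2)\alpha+2\bigr)=(\alpha+1)^2$.
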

\begin{proof}
Since \(r_X(t)=\frac{\alpha}{\lambda_1+t}$ and $ r_Y(t)=\frac{\alpha}{\lambda_2+t},\) for $t\ge0$,
we have \(r_X(t)\ge r_Y(t),\)
which implies $X\le_{hr}Y.$

Furthermore, \(J(X_{1:n},X;t)=-\frac{n\alpha}{2(\alpha(n+1)+1)}\,r_X(t),\)
where $-\frac{n\alpha}{2(\alpha(n+1)+1)}<0.$
Hence,
$J(X_{1:n},X;t)\le J(Y_{1:n},Y;t).$

Also,
\[\operatorname{VarJ}(X_{1:n},X;t)=c_1(\alpha)\, r_X^2(t),\]
where
\[c_1(\alpha)=\frac{n\alpha}{4(\alpha(n+2)+2)}
-\frac{n^2\alpha^2}{4(\alpha(n+1)+1)^2}
>0.\]
Therefore,
\[\operatorname{VarJ}(X_{1:n}, X; t)
\ge\operatorname{VarJ}(Y_{1:n},Y;t).\]
\end{proof}
\begin{remark}
Suppose that $X\sim \mathrm{Lomax}(\alpha_1,\lambda_1)$ and  $Y\sim \mathrm{Lomax}(\alpha_2,\lambda_2),$
with both the shape and scale parameters allowed to vary.
Although the conditions $\alpha_1\ge\alpha_2$ and $\lambda_1\le\lambda_2$ guarantee that $X\le_{hr}Y,$
they do not, in general, imply an ordering of either $J(X_{1:n},X;t)$ and $\operatorname{VarJ}(X_{1:n},X;t)$ relative to their counterparts for $Y$. 
 This is due to the explicit dependence of the coefficients multiplying  the hazard rate on the shape parameter~$\alpha$.
\end{remark}
\section{Nonparametric  estimation }\label{Section4}
In this section, we propose a nonparametric estimation of $\operatorname{VarJ}(X_{i:n}, X;  t).$
Let \(X_1, \ldots, X_n\) be a random sample from a population with
probability density function (PDF) \(f(\cdot)\) and cumulative
distribution function (CDF) \(F(\cdot)\). 
Based on the kernel estimation framework introduced by Silverman \cite{Silverman},
the probability density function \(f(\cdot)\) is estimated as
\[\widehat{f}(x)=\frac{1}{n h_f}\sum_{i=1}^{n}K\left(\frac{x-X_i}{h_f}\right),\]
where \(K(\cdot)\) denotes the kernel function and \(h_f\) is the
corresponding bandwidth parameter.

The bandwidth parameter plays a crucial role in kernel density
estimation because it determines the amount of smoothing in the
estimated density. Small bandwidths tend to produce highly variable
and undersmoothed estimates, whereas large bandwidths result in
smoother estimates that may fail to capture important features of the
underlying distribution. Hence, the selection of an appropriate
bandwidth is an important step in kernel density estimation.

In this study, the least-squares cross-validation (CV) method is used
to select the bandwidth parameter. The leave-one-out CV bandwidth is
obtained by minimizing the least-squares cross-validation criterion
and is defined as
\[
h_f^{\mathrm{CV}}
=
\underset{h_f>0}{\arg\min}
\left\{
\int_{-\infty}^{\infty}
\widehat{f}_{h_f}^{\,2}(x)\,dx
-
\frac{2}{n}
\sum_{i=1}^{n}
\widehat{f}_{h_f,-i}(X_i)
\right\},
\]
where \(\widehat{f}_{h_f,-i}(X_i)\) denotes the kernel density
estimator evaluated at \(X_i\), with the \(i\)-th observation omitted
from the sample.

For the estimation of the cumulative distribution function, we employ the kernel-based approach proposed by Nadaraya \cite{Nadaraya}. The kernel estimator of the CDF is given by
\begin{align}
\widehat{F}_{h_F}(x)=\frac{1}{n}\sum_{i=1}^{n}W\left(\frac{x-X_i}{h_F}\right),
\end{align}
where $W(x)=\int_{-\infty}^{x} K(t)\,dt$
denotes the cumulative distribution function associated with the
kernel function \(K(\cdot)\). In the present study, the standard
normal kernel is considered, for which \(W(\cdot)=\Phi(\cdot)\).

The bandwidth $h_F$ is also selected using a cross-validation
criterion. Specifically, the leave-one-out CV bandwidth is determined
by minimizing the integrated squared error between the empirical
indicator function and the leave-one-out kernel estimator, given by
\[h_F^{\mathrm{CV}}
=
\underset{h_F>0}{\arg\min}
\frac{1}{n}
\sum_{i=1}^{n}
\int_{-\infty}^{+\infty}
\left[
I(x-X_i\geq 0)
-
\widehat{F}_{h_F,-i}(x)
\right]^2
dx,
\]
where
\[
\widehat{F}_{h_F,-i}(x)
=
\frac{1}{n-1}
\sum_{\substack{j=1\\j\neq i}}^{n}
\Phi\left(
\frac{x-X_j}{h_F}
\right)
\]
denotes the leave-one-out kernel estimator of the CDF (see Bowman et al.  \cite{Bowman}).

Using the kernel-based estimators of \(f(\cdot)\) and \(F(\cdot)\),
the density and distribution function of the minimum order statistic
\(X_{1:n}\) are estimated as
\[\widehat{f}_{1:n}(x)=n\left[1-\widehat{F}(x)\right]^{n-1}\widehat{f}(x),\]
and
\[\widehat{F}_{1:n}(t)=1-\left[1-\widehat{F}(t)\right]^n.\]

Consequently, the nonparametric estimator of the dynamic residual extropy-based variance measure is given by
\[
\widehat{\operatorname{VarJ}}(X_{1:n},X;t)
=
\frac{1}{4}
\left\{
\int_t^\infty
\left[
\frac{\widehat{f}(x)}
{\widehat{\overline{F}}(t)}
\right]^2
\frac{\widehat{f}_{1:n}(x)}
{\widehat{F}_{1:n}(t)}
\,dx
-
\left[
\int_t^\infty
\frac{\widehat{f}(x)}
{\widehat{\overline{F}}(t)}
\frac{\widehat{f}_{1:n}(x)}
{\widehat{F}_{1:n}(t)}
\,dx
\right]^2
\right\},
\]
where
\[\widehat{\overline{F}}(t)=1-\widehat{F}(t).\]
\begin{figure}[ht]
\begin{center}
\includegraphics[width=7cm,height=6cm]{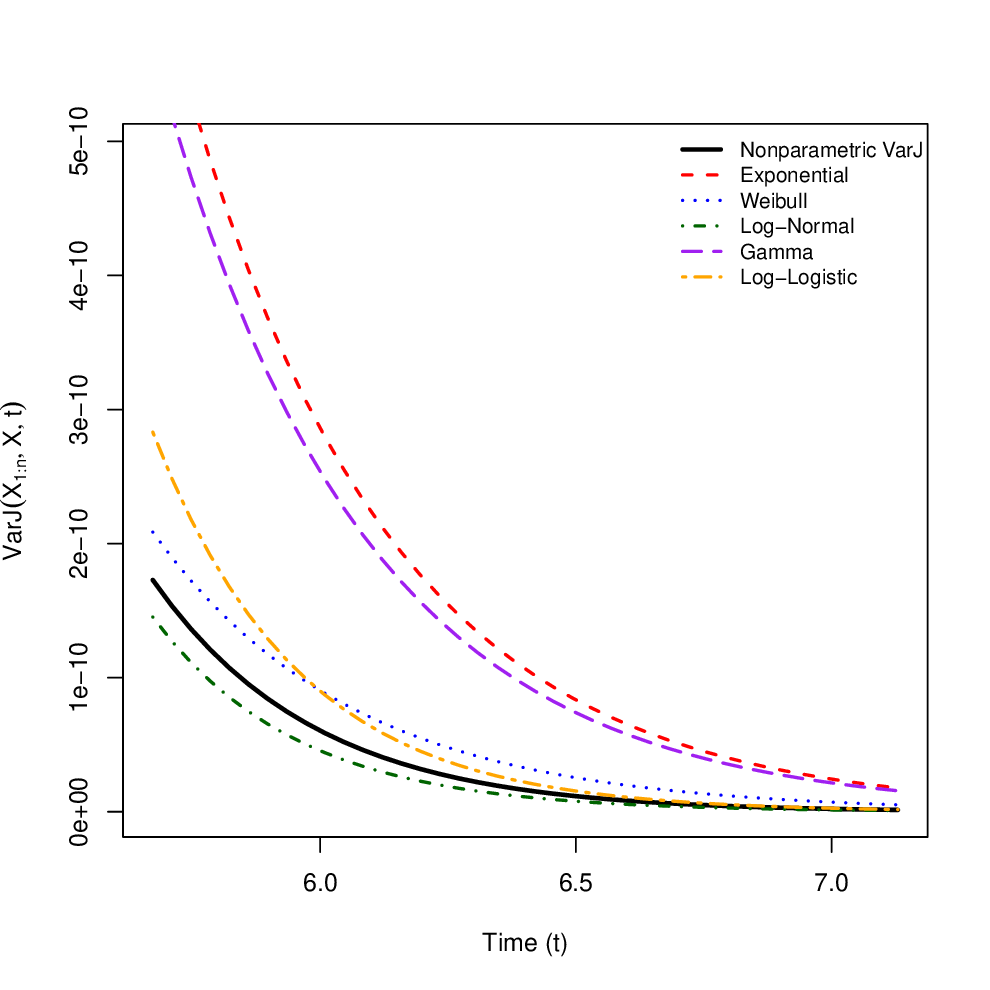}\\
\vspace{-0.2cm}
\caption{\footnotesize{Estimation of $\mathrm{VarJ}(X_{1:n},X;t)$ for the number of casualties data }}\label{fig1}
\end{center}
\end{figure}

\section{Real  data}\label{Section5}
To demonstrate the practical applicability of the proposed measure, we consider a real data set consisting of the numbers of casualties
reported in $n=44$ different plane crashes, originally considered in \cite{Chowdhury}.  Following the analysis in \cite{Moham}, the exponential, Weibull, gamma, log-normal, and log-logistic distributions were considered as candidate models for the data.

In the present study, the proposed $\mathrm{VarJ}(X_{1:n},X;t)$ measure is employed to assess the
agreement between the nonparametric and parametric representations of the data. The nonparametric estimate of
$\mathrm{VarJ}(X_{1:n},X;t)$ is obtained from the kernel-based estimators of the PDF and CDF introduced in the previous section.
The corresponding theoretical values are then calculated for each of the considered models.

Figure~\ref{fig1} presents the nonparametric estimate of $\mathrm{VarJ}(X_{1:n},X;t)$, together with the corresponding
theoretical curves for different values of $t$. As shown in the figure, the theoretical curve associated with the log-normal model
exhibits the closest agreement with the nonparametric estimate over the considered range of $t$. This result indicates that, among the
considered models, the log-normal distribution provides the most suitable representation of the observed data in terms of the
proposed $\mathrm{VarJ}$ measure.

This finding is consistent with the results reported in \cite{Moham}, where
the log-normal distribution was identified as an appropriate model
for these data. Therefore, the proposed
\(\mathrm{VarJ}(X_{1:n},X;t)\) measure provides an additional
distributional assessment based on the dynamic residual variability
structure of the data and offers a useful tool for comparing
competing parametric models.
\section{Conclusion}
In this paper, we introduced a measure for quantifying the dispersion of residual inaccuracy associated with an order statistic. We showed that, for the exponential distribution, this measure is symmetric with respect to $i$. When a closed-form expression for the proposed measure is not available, deriving suitable upper and lower bounds can be useful. To this end, we obtained bounds for the proposed measure using Chernoff’s inequality. We also derived bounds in terms of the hazard rate and the eta function. We established that the proposed variance-based inaccuracy measure is invariant under location transformations but not under scale transformations. Furthermore, a dynamic residual version of the measure was developed, together with characterization results and bounds based on extropy and varextropy. A nonparametric kernel estimator of $\operatorname{VarJ}(X_{i:n},X;t)$ was also provided. The cross-validation (CV) method was considered for bandwidth selection in both PDF and CDF kernel estimation. Finally, an application to a real dataset demonstrated how the proposed measure can be used in model selection. 
\bibliographystyle{amsplain}

\end{document}